\date{\today}
 \author{Romain Dujardin}
 \thanks{Research partially supported by ANR project BERKO}
\address{Institut de mathématiques de Jussieu, UMR 7586 du CNRS, Projet Géométrie et Dynamique,
 \& UFR de mathématiques Université Paris 7, Site Chevaleret, Case 7012, 75205 Paris cedex 13, 
         France}
\email{dujardin@math.jussieu.fr}
\title{Wermer examples and currents}
\keywords{Homogeneous Monge-Ampère equation, laminar currents, polynomial hulls}
\subjclass[2000]{32U40, 32U15, 32E20}
\newcommand{\cc}{\mathbb{C}}
\newcommand{\re}{\mathbb{R}}
\newcommand{\dd}{\mathbb{D}}
\newcommand{\zz}{\mathbb{Z}}
\newcommand{\nn}{\mathbb{N}}
\newcommand{\e}{\varepsilon}
\newcommand{\cv}{\rightarrow}
\newcommand{\fr}{\partial}
\newcommand{\om}{\Omega}
\newcommand{\set}[1]{\left\{#1\right\}}
\newcommand{\norm}[1]{\left\Vert#1\right\Vert}
\newcommand{\abs}[1]{\left\vert#1\right\vert}
\newcommand{\cd}{{\cc^2}}
\newcommand{\rest}[1]{ \arrowvert_{#1}}
\newcommand{\unsur}[1]{\frac{1}{#1}}
\newcommand{\rond}{\hspace{-.1em}\circ\hspace{-.1em}}
\newcommand{\cS}{\mathcal{S}}
\newcommand{\lrd}{\underline{\mathrm{rad}}}
\newcommand{\urd}{\overline{\mathrm{rad}}}
\newcommand{\itm}{\item[-]}
\DeclareMathOperator{\supp}{Supp}
\DeclareMathOperator{\dist}{dist}
\newtheorem{prop} {Proposition} [section]
\newtheorem{thm}[prop] {Theorem} 
\newtheorem{lem}[prop] {Lemma}
\newtheorem{cor}[prop]{Corollary}
\newtheorem{theo}{Theorem} 
\theoremstyle{remark}
\newtheorem{rmk}[prop]{Remark}
\begin{document}
\maketitle

\begin{abstract}
We give the first examples of positive closed currents $T$ in $\mathbb{C}^2$
 with continuous potentials, $T\wedge T=0$  and whose supports do not contain any holomorphic disk. 
This gives in particular an affirmative answer to a question of Forn\ae ss and Levenberg.
We actually construct examples with potential
 of class $C^{1,\alpha}$ for all $\alpha<1$. This regularity is expected to be essentially optimal.
\end{abstract}

\section*{Introduction}

The purpose of this paper is to investigate the geometric properties 
 of positive closed currents $T$ with $T\wedge T=0$ in  complex dimension two. 
Equivalently, we deal with plurisubharmonic functions $u$, 
solutions to the homogeneous Monge-Ampère equation $(dd^cu)^2=0$.
 
Let us start with some generalities about geometric currents. A $(1,1)$ positive closed current in $\cd$
is  {\em uniformly laminar} if it is an integral 
of currents of integration over the leaves of a lamination. A current  is said 
to be {\em laminar} if it is uniformly laminar outside a set of arbitrary small trace measure. 
These concepts have been introduced in \cite{bls} and have  proved to be 
very useful since then. 
If $T$ is uniformly laminar, 
then $T\wedge T=0$ as soon as the wedge product makes sense \cite{isect}. 
Likewise, any local plurisubharmonic (psh for short) potential for $T$ is harmonic along the leaves of the underlying lamination.

\medskip

Back to our initial problem, we work locally  in  $\cd$ so let us consider a psh potential $u$ for $T$. We assume that $u$ is, say, bounded, so that the self-intersection $T\wedge T = (dd^c u)^2$ is well defined, and vanishes. If $u$ is of class $C^3$, the Frobenius Integrability Theorem implies that there exists a foliation by holomorphic disks along which $u$ is harmonic (see \cite{bk}), and $T$ is a uniformly laminar current associated to this foliation.
It is expected, but apparently still unknown, that this result should carry over for $u$ of class up to $C^2$ (see \cite{bedford}). As we shall demonstrate here, the situation is dramatically different for regularity below 
$C^{2}$. Before entering into the details of our results, let us mention that the laminarity properties of the solutions to homogeneous Monge-Ampère equations have recently played a prominent role in connection with the study of  extremal metrics in K\"ahler
  geometry \cite{do, chen-tian}. The natural regularity appearing in this setting is $C^{1,1}$.

\medskip

It is a classical result due to Sibony 
that there exist examples of currents $T$ with $T\wedge T=0$ and $C^{1,1}$ potential,
 which are not uniformly laminar (this construction was reported  e.g. in \cite{bf, fornaess-levenberg}). Indeed let $\mathbb{B}$
 be the unit ball in $\cd$ and $X \subset \fr \mathbb{B}$ be a closed set with the property that the polynomial hull $\widehat X$ does not contain
 any holomorphic disk (a so-called Stolzenberg or Wermer example, see below for more details). Let $f\in C^\infty(\fr \mathbb{B})$
 be a nonnegative function such that $X=\set{f=0}$ and let $u$ be the unique psh function in $\mathbb{B}$, continuous in  $\overline{\mathbb{B}}$
 such that  $u\rest{\fr \mathbb{B}} =f$ and $(dd^cu)^2=0$ \cite{bt}. Let also $T=dd^cu$.  Then  $u$ is  of class $C^{1,1}$, nonnegative, and 
$\set{u=0} =\widehat X$. Now if $p\in \widehat X$ and  $\Delta$ is any holomorphic disk through $p$, then $u$ cannot be harmonic along $\Delta$. Indeed, $\Delta$ is not contained in $\widehat X$ so $u\rest{\Delta}$ is not identically 0, and $u$ has a minimum at $p$ so it is not harmonic. This both 
shows that $p \in \supp(T)$ and that $T$  cannot be uniformly laminar near $p$. 

On the other hand it can be shown (see Proposition \ref{prop:sibony} below) that in this situation $\widehat X$ has
always  zero trace measure  (relative to $T$) so nothing prevents these currents from being uniformly laminar  on an open set of full mass.

To the best of our knowledge, an example is still lacking of a non-laminar current $T$ with $T\wedge T=0$, even with merely bounded potential.
We fill this gap by proving the following result.

\begin{theo}\label{thm:wermer}
 There exists a  closed positive $(1,1)$ current $T = dd^cu$ in the unit bidisk  $\dd^2\subset\cd$ such that:
\begin{enumerate}[i.]
 \item $u$ is of class $C^{1,\alpha}$ for all $0<\alpha<1$;
 \item $T\wedge T=(dd^c u)^2=0$;
 \item the support of $T$ does not contain any holomorphic disk.
\end{enumerate}
\end{theo}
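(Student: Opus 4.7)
The plan is to bootstrap the Sibony construction recalled in the introduction. A single Wermer set $\widehat X$ gives a current $T=dd^c u$ with $T\wedge T=0$ whose support is typically strictly larger than $\widehat X$ and is uniformly laminar outside a set of trace zero; I want to destroy those laminar leaves by placing Wermer-type obstructions along them at every scale, while keeping $(dd^c u)^2=0$.

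The first step is to extract a well-controlled building block from the Stolzenberg--Wermer machinery: a polynomially convex compact $K\subset\overline{\dd^2}$ containing no holomorphic disk, together with a $C^{1,1}$ psh solution $u_K$ of $(dd^c u_K)^2=0$ vanishing on $K$ (via \cite{bt}, as in the introduction). The essential extra data are quantitative bounds on $\norm{u_K}_{C^{1,1}}$ and precise control on the leaves of the lamination carrying $dd^c u_K$ away from $K$, including their tangent directions.

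Next I would iterate: inside each region of uniform laminarity of $u_K$, insert rescaled copies of the building block whose Wermer sets cut transversally through the ambient leaves, and set $u=\sum_{n\geq 0} a_n u_n$ for a geometrically decreasing sequence of weights. Tuning the $a_n$ against the blow-up of $\norm{u_n}_{C^{1,1}}$ under rescaling should give regularity $C^{1,\alpha}$ for every $\alpha<1$ but not $C^{1,1}$, by standard interpolation between $C^{0}$ and $C^{1,1}$ norms. The central difficulty is to preserve $(dd^cu)^2=0$: expanding the square one must kill every cross term $dd^cu_n\wedge dd^cu_m$, which I would try to achieve by an inductive alignment condition forcing $u_{n+1}$ to be harmonic along each leaf of the lamination carrying $dd^cu_n$, so that the wedge product vanishes by a slicing argument in the spirit of \cite{bls, isect}. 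Maintaining this compatibility across infinitely many scales, rather than the regularity bookkeeping, is the main obstacle; it should constrain each Wermer set $K_n$ to be graph-like over disks tangent to the preceding lamination.

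Finally, to prove (iii), suppose a holomorphic disk $\Delta$ lay in $\supp T$. Then $u\rest{\Delta}$ is subharmonic, and by the multi-scale nature of the construction $\Delta$ must meet infinitely many of the rescaled Wermer sets $K_n$ while not being contained in any of them (since each $K_n$ contains no disk). Applying Proposition~\ref{prop:sibony} at each scale to control how $u_n\rest{\Delta}$ behaves near $K_n$, and combining with a density argument on the tangent directions of $\Delta$ at these intersections (the leaves of the successive laminations twist fast enough that no single disk can be harmonic for all $u_n$), yields a contradiction.
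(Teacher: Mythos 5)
Your scheme has a structural flaw that no amount of tuning can repair: you build $u=\sum_n a_n u_n$ by \emph{adding} positive closed currents, but for positive currents superposition only enlarges the support, $\supp(dd^cu)\supseteq\supp(dd^cu_0)$. The Sibony-type building block $u_0=u_K$ is assumed (and needed, for your cross-term argument) to be uniformly laminar away from $K$, i.e.\ its support away from $K$ is swept out by holomorphic leaves. Those leaves, and every sub-disk of them, remain inside $\supp(dd^cu)$ no matter how many rescaled Wermer obstructions you insert; ``destroying the lamination'' by adding further currents does not remove a single disk from the support, so conclusion (iii) fails at the first step. Your argument for (iii) presupposes that a disk $\Delta\subset\supp T$ must meet infinitely many $K_n$ and interact with them, but a leaf (or a small piece of a leaf between two obstructions) of the $dd^cu_0$-lamination is already a counterexample. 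There is also an internal contradiction in the way you propose to kill the cross terms: you want $u_{n+1}\geq 0$, vanishing exactly on a Wermer set $K_{n+1}$ that cuts \emph{transversally} through the leaves carrying $dd^cu_n$, and simultaneously harmonic along those leaves; but by the minimum-principle argument recalled in the introduction, $u_{n+1}$ cannot be harmonic on any disk that meets $K_{n+1}$ without being contained in it, and $K_{n+1}$ contains no disk. So either the leaves avoid $K_{n+1}$ (and then you have not obstructed them) or the alignment condition fails and $dd^cu_n\wedge dd^cu_{n+1}$ is not controlled. Finally, the building block you postulate --- a $C^{1,1}$ solution with an explicit lamination, with controlled tangent directions, off the hull --- is not available: whether any foliation/lamination exists on (a dense subset of) the support in the $C^{1,1}$ case is precisely Bedford's open question cited in the paper, and the quantitative $C^{1,1}$ bounds and the interpolation bookkeeping are never what limits the construction.

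The paper proceeds quite differently: instead of trying to thicken the support of a Sibony current, it makes the Wermer set itself thick enough to carry a regular current. One constructs nested sets $X_n=\bigcup_s\set{\abs{P_{n,s}}<\delta_n}$ by an explicit polynomial iteration $(P_{n,s}-\sigma)^2-\e_{n+1}A_{n+1}$ with an extra ``subdivision'' step governed by integers $m_n$, takes $X=\bigcap X_n$ (no disks, by a square-root monodromy lemma), and sets $u=\lim u_n$ with $u_n=\unsur{2^n\#\mathcal{S}_n}\sum_s\log\max(\abs{P_{n,s}},\delta_n)$. Uniform convergence of $(u_n)$ gives a continuous potential supported on $X$; $T\wedge T=0$ comes essentially for free since $\supp T\subset X$ is disjoint from $\fr X_n=\supp T_n$ and the potentials converge uniformly. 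The $C^{1,\alpha}$ (indeed $C^{1+\psi}$) regularity is then obtained not by interpolation of $C^{1,1}$ norms but by estimating the Riesz mass of balls for the slice measures near the vertical boundary (choosing $m_n$ to grow fast enough), transferring these estimates to the whole bidisk via the Bedford--Taylor interior estimates for the homogeneous Monge--Amp\`ere equation, and invoking a classical potential-theoretic criterion relating the modulus of continuity of $\nabla u$ to the growth of $\mu(B(x,r))$. If you want a workable proof, this is the template to follow; the additive multi-scale superposition cannot yield property (iii).
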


Recall that a function is of class $C^{1,\alpha}$ if it is differentiable and its derivatives are Hölder continuous of exponent $\alpha$. Likewise,  given any continuous increasing function $\psi$ with $\psi(0)=0$,  we say that $u$ is $C^{1+\psi}$ if its derivatives have modulus of continuity $O(\psi)$. Our method actually produces examples of potentials $u$ with regularity $C^{1+\psi}$, where $\psi$ is any 
modulus of continuity such that  $\frac{\psi(\delta)}{ \delta \abs{\log \delta}}\cv \infty$  --see Theorem \ref{thm:refined} for a precise statement. 
On the other hand it is  a feature  of our construction that these examples cannot be made $C^{1,1}$ (see \S \ref{subs:hausdorff}). 

Observe that item {\em iii.} of Theorem \ref{thm:wermer} cannot  be true when $u$ is $C^2$, for $\supp(T)$ would have nonempty interior in this case.  In \cite{bedford}, Bedford asks whether a foliation exists on a dense subset of $\supp(T)$ when $u$ is $C^{1,1}$. 

\medskip

By definition, a Wermer example is a subset in the unit bidisk $\dd^2$ which is the polynomial hull of a
compact subset of $\fr \dd\times\dd$,  projects onto $\dd$ under the first projection and contains no holomorphic disk.
The existence of such objects is originally due to  Wermer \cite{we} and they have subsequently 
been studied by several authors \cite{lev, alexander-michigan, slod, duval-s}.  
 
It is an easy observation of \cite{duval-s} that there exist  Wermer examples supporting 
positive closed currents, therefore  currents whose support do not contain any holomorphic disk. This observation is
 the starting point of Theorem \ref{thm:wermer}.

However, a source of difficulty is that the Wermer examples constructed thus far tend to be ``small'', 
and it is delicate to estimate their size from below anyway. For instance, in 
\cite{fornaess-levenberg}, the authors ask whether there exists a non pluripolar Wermer example. 
Of course, pluripolarity of the support of a current is an obstruction to the local boundedness of its potential. 

What we do here is first to provide a construction of ``thick'' Wermer examples (Section \ref{sec:wermer}). 
 Then we develop methods to get effective lower bounds on the size of such objects, in the pluripotential-theoretic sense 
(Sections \ref{sec:continuous}  and \ref{sec:regularity}). We ultimately take advantage of the regularity properties 
of the solutions to the  homogeneous Monge-Ampère equation (\S \ref{subs:mongeampere}).

In particular, Theorem \ref{thm:wermer} in the bounded potential case 
gives a positive answer to the question of Forn\ae ss and Levenberg. 

Another question is to determine what the Hausdorff dimension of a Wermer example can be. 
The examples we construct have dimension up to 4, 
but probably always zero Lebesgue measure (see \S \ref{subs:hausdorff}). 

A related issue is the Stolzenberg ``swiss cheese'' example 
\cite{stolzenberg}, where no boundary condition is imposed. 
Stolzenberg-like examples with positive Lebesgue measure 
have been constructed in \cite{duval-levenberg}, 
nevertheless I don't know how to use them to obtain interesting currents.

\section{Wermer examples}\label{sec:wermer}

In this section we provide a construction of Wermer examples, 
based on  that of \cite{duval-s}.
We actually arrange so that our objects have some laminar structure
 near the boundary of the bidisk, which will be useful for  regularity issues. 
 Therefore to get an actual  Wermer example  it will be enough to restrict to a  smaller bidisk.

We denote by $D(a,r)$ the disk of center $a$ and radius $r$ in $\cc$, and write  $\dd = D(0,1)$.
We say that a  subset $X$ in $\dd\times\dd$ is {\em horizontal}  if $ X\subset \dd\times D(0,1-\e)$ for some $\e>0$. 
A current is horizontal if its support is.
Dividing the $z$ coordinate by 2 we work the bidisk $D(0,1/2)\times\dd$ --this is convenient  for  if $z,z'\in D(0,1/2)$, then
 $\abs{z-z'}<1$. 

\medskip

Let first  $(a_n)_{n\geq 1}$ be a sequence of  points in $D(0,1/4)$ such that  $(a_{2p})$ and  $(a_{2p+1})$
are dense in that disk. We put $A_n(z,w)= z-a_n$ if $n$ is odd and $z+\frac{w}{100}-a_n$ if $n$ is even. Note that  
$\abs{A_n}\leq 1$ in $D(0, 1/2)\times\dd$.

We will inductively define families of polynomials $P_{n,s}$, where $n\in \nn$ and $s$ ranges through a finite set 
$\mathcal{S}_n$. Fix $P_0(z,w)=w$, and $\mathcal{S}_0 = \set{0}$. 

Let $(\delta_n)_{n\geq 0}$ and $(\e_n)_{n\geq 1}$ be  sequences of positive real numbers, with $\delta_0 = 1/2$, and $(m_n)_{n\geq 1}$ 
be a sequence of positive integers. The inductive step is as follows. Assume that
$\mathcal S_n$ and the polynomials $(P_{n,s})_{s\in \mathcal{S}_n}$ have been constructed, 
and consider the finite set $\Sigma_{n+1}:= \overline{D}\big(0, \delta_n(1-\unsur{m_{n+1}})\big)\cap \frac{3\delta_n}{m_{n+1}} \zz^2$. 
That is, $\Sigma_{n+1}$ is the set of those $\sigma \in D(0, \delta_n)\cap \frac{3\delta_n}{m_{n+1}} \zz^2$ such that
$D(\sigma, \frac{\delta_n}{m_{n+1}})\subset D(0, \delta_n)$.  By construction,
the disks $D(\sigma, \frac{\delta_n}{m_{n+1}})$ have disjoint closures.

For large $m_{n+1}$, $\#\Sigma_{n+1} \sim \frac{\pi}{9} m_{n+1}^2$. On the contrary, observe that when $m_{n+1} = 1$, 
$\Sigma_{n+1} = \set{0}$. 

Let $\cS_{n+1} = \cS_n\times \Sigma_{n+1}$ and for $s' = (s,\sigma)\in \cS_{n+1}$ let 
\begin{equation}
P_{n+1, s'} = (P_{n,s} - \sigma)^2 - \e_{n+1} A_{n+1}.
\end{equation}

Put $X_n = \bigcup_{s\in \mathcal S_n} \set{\abs{P_{n,s}}<\delta_n}$.
It is useful to think about the inductive definition of $X_n$  as being made up  of two steps: we  first replace 
$X_{n,s} := \set{\abs{P_{n, s}}<\delta_n}$ by $\bigcup_{\sigma\in \Sigma_{n+1}} X^{\mathrm{int}}_{n+1,s,\sigma}$ , where 
$X^{\mathrm{int}}_{n+1,s,\sigma}:=
\set{\abs{P_{n, s}-\sigma}<
\frac{\delta_n}{m_{n+1} }}$
 (``subdivision''), and  then the intermediate 
$X^{\mathrm{int}}_{n+1,s,\sigma}$ with $X_{n+1, s'}=
\set{\abs{P_{n+1, s'}}<\delta_{n+1}}$ (``ramification''). 
As compared to \cite{we,lev, slod, duval-s}, the subdivision  step is new.

\begin{lem}\label{lem:construction}
 Fix a  sequence of positive real numbers $(r_n)_{n\geq 1}$,  decreasing to zero, with $r_n\leq \unsur{10}$ 
Let  $(\delta_n)_{n\geq 0}$ be the sequence defined by $\delta_0 =1/2$ and 
$\delta_{n+1} = \frac{\delta_n^{2}r_{n+1}}{4 m_{n+1}^2}$ 
and $(\e_n)_{n\geq 1}$ be defined by $\e_{n+1} =\frac{\delta_n^{2}}{2m^2_{n+1}}$.

Then  the following properties hold for every $n\geq1$:
\begin{enumerate}[(i.)]
 \item $\overline{X_{n+1}}\subset {X_n}$ in $D(0, 1/2)\times\dd$; more precisely, with notation as above for every $s'=(s,\sigma)$,  we have that 
 $\overline{X_{n+1, s'}}\subset {X^{\mathrm{int}}_{n+1,s,\sigma}}\subset X_{n,s}$;
 \item $X_{n+1}$ does not contain the  
 graph   of any holomorphic (even merely continuous) function over $D(a_{n+1}, r_{n+1})$,
 relative to the  projection $\pi_0(z,w) = z$ if $n$ if even, relative to $\pi_1(z,w) = z + \frac{w}{100}$ if $n$ is odd;
 \item for each $s\in \cS_n$ and 
  $\alpha\in \cc$ with $\abs{\alpha}< 2\delta_n$, the analytic set
$\set{P_{n,s} = \alpha}$ is horizontal in $D(0, 1/2)\times\dd$, of degree $2^n$ and is a (non ramified) covering
 over $\set{\frac38\leq \abs{z} \leq \frac12}$, relative to  $\pi_0$. 
Furthermore, if $s_1\neq s_2$, the varieties
$\set{P_{n,s_1} = \alpha}$ and $\set{P_{n,s_2} = \alpha}$ are disjoint
\end{enumerate}
\end{lem}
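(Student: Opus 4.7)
I will proceed by induction on $n$, simultaneously establishing (i) and (iii) and then deducing (ii). The base corresponds to $n=0$: $X_0=\set{\abs{w}<1/2}$, $P_0=w$ has $w$-degree one, and each level set $\set{w=\alpha}$ with $\abs{\alpha}<1$ is a horizontal graph covering $\set{3/8\le\abs{z}\le 1/2}$ once.

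The inductive step for (i) rests on the estimate, valid on $\overline{X_{n+1,s'}}$,
$$\abs{P_{n,s}-\sigma}^2 = \abs{P_{n+1,s'}+\e_{n+1}A_{n+1}} \le \delta_{n+1}+\e_{n+1} = \frac{\delta_n^2\bigl(r_{n+1}/2+1\bigr)}{2m_{n+1}^2} < \frac{\delta_n^2}{m_{n+1}^2},$$
where one uses $\abs{A_{n+1}}\le 1$ on $D(0,1/2)\times\dd$ and $r_{n+1}\le 1/10$. This gives $\overline{X_{n+1,s'}}\subset X^{\mathrm{int}}_{n+1,s,\sigma}$; the second inclusion $X^{\mathrm{int}}_{n+1,s,\sigma}\subset X_{n,s}$ follows from the triangle inequality and the lattice bound $\abs{\sigma}\le\delta_n(1-1/m_{n+1})$.

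For (iii), the same estimate applied with $\abs{\alpha}<2\delta_{n+1}$ confines $\set{P_{n+1,s'}=\alpha}$ inside $\set{\abs{P_{n,s}-\sigma}<\delta_n/m_{n+1}}\subset X_{n,s}$, whence iterating (i) places it in $X_0=\set{\abs{w}<1/2}$ and horizontality follows. Since $\deg_w P_{n+1,s'}=2\deg_w P_{n,s}=2^{n+1}$ by induction and all $w$-roots lie in $\set{\abs{w}<1/2}$, $\pi_0$ realizes a cover of degree $2^{n+1}$ over the annulus. Disjointness of the varieties for $s'_1\neq s'_2$ splits into the cases $s'_i=(s,\sigma_i)$ with $\sigma_1\neq\sigma_2$ (the intermediate sets $X^{\mathrm{int}}_{n+1,s,\sigma_i}$ are disjoint by construction of $\Sigma_{n+1}$) and differing first components (apply induction). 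For non-ramification, decompose $\fr_w P_{n+1,s'}=2(P_{n,s}-\sigma)\fr_w P_{n,s}-\e_{n+1}\fr_w A_{n+1}$. In the odd case ($\fr_w A_{n+1}=0$), vanishing forces either $P_{n,s}=\sigma$, pinning $z$ at $a_{n+1}-\alpha/\e_{n+1}$, of modulus $\le 1/4+r_{n+1}<3/8$ and hence outside the annulus, or $\fr_w P_{n,s}=0$ at a point where $\abs{P_{n,s}}<2\delta_n$, contradicting the inductive non-ramification. The even case requires a more quantitative argument, using the lower bound $\abs{z+w/100-a_{n+1}}\ge 3/25$ on the annulus and a positive lower bound (existing by compactness and the inductive hypothesis) for $\abs{\fr_w P_{n,s}}$ on the horizontal region $\pi_0^{-1}(\text{annulus})\cap\set{\abs{P_{n,s}}\le\delta_n}$.

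For (ii), the disjointness established in (iii) forces any graph of a continuous $\phi:D(a_{n+1},r_{n+1})\to\dd$ contained in $X_{n+1}$ to lie in a single component $X_{n+1,s'}$. Assume $n+1$ is odd, so $A_{n+1}=z-a_{n+1}$, and set
$$f(z) := P_{n+1,s'}(z,\phi(z))+\e_{n+1}(z-a_{n+1}) = \bigl(P_{n,s}(z,\phi(z))-\sigma\bigr)^2.$$
On a circle $\abs{z-a_{n+1}}=\rho$ with $\rho$ close to $r_{n+1}$, $\abs{\e_{n+1}(z-a_{n+1})}$ is close to $2\delta_{n+1}$ while $\abs{P_{n+1,s'}(z,\phi(z))}<\delta_{n+1}$, so $f$ and $\e_{n+1}(z-a_{n+1})$ are linearly homotopic inside $\cc\setminus\set{0}$ and $f$ has winding number $1$ around $0$. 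But $f$ is the square of the continuous nonvanishing function $P_{n,s}(z,\phi(z))-\sigma$, whose winding number squared must therefore be even---a contradiction. The even case is identical after parametrizing the graph by $\zeta=z+w/100$, under which $A_{n+1}$ restricted to the graph reduces to $\zeta-a_{n+1}$. The main technical difficulty will lie in the quantitative non-ramification in the even case of (iii), where $\fr_w A_{n+1}\ne 0$ prevents the clean vanishing dichotomy of the odd case and forces one to propagate lower bounds on $\abs{\fr_w P_{n,s}}$ through the induction, possibly enlarging $m_{n+1}$ in terms of stage-$n$ data.
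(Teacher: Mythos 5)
Parts (i) and (ii) of your proposal are essentially the paper's own argument: (i) reduces to the numerical inequality $\delta_{n+1}+\e_{n+1}<\delta_n^2/m_{n+1}^2$, and your winding-number argument for (ii) (comparing $f=(P_{n,s}-\sigma)^2$ with $\e_{n+1}(\zeta-a_{n+1})$ on circles of radius between $r_{n+1}/2$ and $r_{n+1}$, using $\delta_{n+1}<\e_{n+1}r_{n+1}$) is exactly the monodromy fact isolated in the paper's Lemma \ref{lem:sqrt}. The genuine problem is (iii), where you take a different route (degree count plus confinement plus non-ramification via $\partial_w P_{n+1,s'}$) and this route has a gap precisely where you flag it: in the even case, ruling out $\partial_w P_{n+1,s'}=2(P_{n,s}-\sigma)\partial_w P_{n,s}-\e_{n+1}/100=0$ requires a lower bound for $\abs{\partial_w P_{n,s}}$ of size comparable to $\delta_n/m_{n+1}$ on the region $\set{3/8\leq\abs z\leq1/2}\cap\set{\abs{P_{n,s}}\leq\delta_n}$, and you do not establish one: the compactness constant is non-effective, and your proposed remedy of enlarging $m_{n+1}$ in terms of stage-$n$ data proves a strictly weaker statement. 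The lemma is asserted for an \emph{arbitrary} sequence of positive integers $(m_n)$, and the paper uses this: Theorem \ref{thm:wermer_continuous} is explicitly applied with $m_n\equiv1$ (the ordinary Wermer construction, \S\ref{subs:ordinary}), so no largeness condition on $(m_n)$ can be tolerated. A further, smaller, untreated point: your claim that $\pi_0$ gives a covering of full degree $2^{n+1}$ needs \emph{all} roots of $w\mapsto P_{n+1,s'}(z_0,w)-\alpha$ to lie in $\dd$, but your confinement estimate uses $\abs{A_{n+1}}\leq1$, which is only valid on $D(0,1/2)\times\dd$, so by itself it does not exclude roots with $\abs w\geq1$.

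The paper avoids both difficulties at once by strengthening the induction hypothesis: for every holomorphic function $\gamma(w)$ on $\dd$ with $\abs\gamma<2\delta_n$ and every $z_0$ in the annulus, the equation $P_{n,s}(z_0,w)=\gamma(w)$ has at least $2^n$ distinct solutions in $\dd$. Then $P_{n+1,s'}=\gamma$ becomes $(P_{n,s}-\sigma)^2=\gamma+\e_{n+1}A_{n+1}$, and since $\abs{\gamma}/\e_{n+1}<r_{n+1}\leq1/10<\abs{A_{n+1}}$ over the annulus, the right-hand side is non-vanishing and admits two holomorphic square roots $\pm g$ with $0<\abs g<\delta_n/m_{n+1}$; the equation reduces to $P_{n,s}=\sigma\pm g$ with $\abs{\sigma\pm g}<\delta_n<2\delta_n$, and the induction hypothesis (applied to the two branches, which give disjoint root sets since $g\neq0$) yields $2^{n+1}$ distinct roots in $\dd$. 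As this equals the $w$-degree, all roots are automatically simple and none escape $\dd$, so non-ramification and the full-degree covering come for free, uniformly in the parity of $n$ and for every choice of $(m_n)$ — no derivative estimates needed. If you want to keep your derivative-based scheme, you would have to propagate an effective lower bound on $\abs{\partial_w P_{n,s}}$ through the induction (in the spirit of Proposition \ref{prop:distortion}, which does such an estimate on vertical slices), valid for all $(m_n)$; as written, your (iii) does not prove the stated lemma.
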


\begin{proof}
It is obvious that for all $s'=(s,\sigma)\in \cS_{n+1}$,
 ${X^{\mathrm{int}}_{n+1,s,\sigma}}\subset X_{n,s}$.
Assuming that the constant $\delta_n$ has been chosen,  to ensure the inclusion
 $\overline{X_{n+1, s'}}\subset {X^{\mathrm{int}}_{n+1,s,\sigma}}$ it is enough that 
\begin{equation}\label{est1}
 \delta_{n+1} + \e_{n+1}< \frac{\delta_n^{2}}{m_{n+1}^2}.
\end{equation}
Let us also observe that since $X_0 = \set{\abs{P_0}<\delta_0=1/2}$ is horizontal, the horizontality assertion in  {\em (iv.)} follows from the fact that $X_n\subset X_0$.

We will use the following elementary lemma, which will be proved afterwards. 

\begin{lem}\label{lem:sqrt}
If $\delta<\e r$, there does not exist any continuous function  $f$ on $D(0,r)$ such that $\abs{(f(\zeta))^2-\e\zeta}<\delta$ for  $\zeta\in  D(0,r)$.
\end{lem}
%

From this we infer that to meet condition {(\em ii.)} it is enough that for every $n$, 
\begin{equation}\label{est2}
 \delta_{n+1}< \e_{n+1}r_{n+1}
\end{equation}

It is clear from the explicit definition of $(\delta_n)$ and $(\e_n)$ that \eqref{est1} and \eqref{est2}, whence {\em (i.)} and {(\em ii.)} hold. 

It remains to check {\em (iii.)} It is clear that $P_{n,s}$ has degree $2^n$ in $w$ so it is enough to prove that the equation 
$P_{n,s}(z_0, w) = \alpha$ has at least (hence exactly) $2^n$ distinct roots for each fixed $z_0$ with ${3/8<\abs{z_0}<1/2}$. Fix such a $z_0$.
We will prove by induction the following slightly stronger fact: let 
$w\mapsto \gamma(w)$ 
be a holomorphic function on  $\dd$, such that  $\abs{\gamma}< 2\delta_n$; then for every $s\in \mathcal{S}_n$, 
the equation  $P_{n,s}(z_0,w) =\gamma$  has at least $2^n$ distinct solutions in $\dd$.  For $n=0$ this follows from Rouché's Theorem.

For convenience we drop the $z_0$ and consider our functions as depending solely on $w$.
Assume the result  holds for $n$, and consider the equation 
$P_{n+1, s'}= \gamma$ where  $\abs{\gamma}<2\delta_{n+1}$ in $ \dd$, that is, $(P_{n,s}-\sigma)^{2} = \gamma +\e_{n+1}A_{n+1}$. The right hand side does not vanish on $U\times\dd$. Indeed  $\gamma +\e_{n+1}A_{n+1}= 0$ is equivalent to $A_{n+1} = -\gamma/\e_{n+1}$, and with the choices that we have made, 
\begin{equation}
 \label{eq:an}
\abs{\frac{\gamma}{\e_{n+1}}}<r_n\leq \unsur{10}\text{ while }\abs{A_{n+1}}>\unsur{8}-\unsur{100}>\unsur{10}. 
\end{equation}
In particular the function  $\gamma 
+\e_{n+1} A_{n+1} $ admits two square
roots $\pm  g$  in $U\times \dd$. We have that 
   $0<\abs{g}<(2\delta_{n+1}+ \e_{n+1})^{1/2}<\delta_n/m_{n+1}$ and 
 the equation $P_{n+1, s'} = \gamma$ is equivalent to $\set{P_{n,s} = \sigma \pm  g}$; we conclude by the induction hypothesis. 
The last assertion in  {\em (iii.)} is obvious.
\end{proof}

\begin{proof}[Proof of Lemma \ref{lem:sqrt}] By scaling, it is enough to prove the result for $\e=1$.
Fix $\zeta_0$ such that  $\abs{\zeta_0} = r$. Since  $\delta<r$, the open set
$\set{z\in \cc, \ \abs{z^2-\zeta_0}<\delta}$ has two connected components. Indeed the critical value of  $z\mapsto z^2-\zeta_0$ lies outside $D(0,\delta)$. Let $U_1(\zeta_0)$ and $U_2(\zeta_0)$ be these two components. As $\zeta_0$ turns around $\fr D(0,r)$ these components are swapped.

If now $f$ is a continuous function satisfying the assumption of the lemma, reducing $r$ slightly we may assume $f$ is continuous on $\overline{D(0,r)}$. Assume $f(r)\in U_1(r)$. By making $\zeta = re^{i\theta}$, $0\leq \theta\leq 2\pi$ wind around $\fr D(0,r)$,  we see that 
 $f(r)$ also belongs to $U_2(r)$, whence the contradiction. When $f$ is holomorphic, an alternate argument is provided by Rouché's Theorem. 
\end{proof}

\begin{prop}\label{prop:wermer}
 Let  $X_n$ be as above and  set $X= \bigcap_n X_n$. 
Then $X$ is a polynomially convex  horizontal subset in $D(0,1/2)\times\dd$, and 
$X\cap (D(0,1/5)\times \dd)$ does not contain any holomorphic  disk. 
\end{prop}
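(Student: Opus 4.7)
\emph{Horizontality and compactness.} Iterating Lemma~\ref{lem:construction}(i) from $X_0 = \set{\abs{w} < 1/2}$ gives $X \subset \overline{D(0,1/2)} \times \overline{D(0,1/2)}$, which establishes horizontality; the same inclusions exhibit $X = \bigcap_n \overline{X_n}$ as a decreasing intersection of compact subsets of $\cd$, hence compact.

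\emph{Polynomial convexity.} Since a decreasing intersection of polynomially convex compact sets is polynomially convex, it is enough to show each $\overline{X_n}$ is polynomially convex. By Lemma~\ref{lem:construction}(i) and the pairwise disjointness of the closed disks $\overline{D(\sigma, \delta_n/m_{n+1})}$ for $\sigma \in \Sigma_{n+1}$, the closures $\overline{X_{n,s}}$, $s \in \cS_n$, are pairwise disjoint, and each is the polynomially convex sublevel set $\set{\abs{P_{n,s}} \leq \delta_n}$. Moreover, sibling components $\overline{X_{n+1,(s,\sigma)}}$ sharing an ancestor $s$ are mapped by the polynomial $P_{n,s}$ into the pairwise disjoint closed disks $\overline{D(\sigma, \delta_n/m_{n+1})}$ in $\cc$. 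Polynomial convexity of the union then propagates up the tree structure of $\cS_n$ by a repeated application of Kallin's lemma. This is the most technical point, since a disjoint union of polynomially convex sets need not itself be polynomially convex without such polynomial separation.

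\emph{Absence of holomorphic disks in $X \cap (D(0,1/5)\times\dd)$.} Suppose $\phi: \dd \cv \cd$ is a non-constant holomorphic map with $\phi(\dd) \subset X \cap (D(0, 1/5) \times \dd)$. The identity $\pi_1 - \pi_0 = w/100$ forces at least one of $\pi_0 \circ \phi$, $\pi_1 \circ \phi$ to be non-constant; say $\pi_0 \circ \phi$ is (the other case is symmetric, using that $\pi_1(\phi(\dd)) \subset D(0, 21/100) \subset D(0, 1/4)$). Near a regular point of $\pi_0 \circ \phi$, $\phi$ locally parametrizes a $\pi_0$-graph of some holomorphic function over a small disk $D(z_0, r)$ with $z_0 \in D(0, 1/5)$; shrinking $r\leq 1/20$ ensures $D(z_0, r) \subset D(0, 1/4)$. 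Density of $(a_{2p+1})$ in $D(0, 1/4)$ combined with $r_n \cv 0$ yields an even $n$ such that $a_{n+1} \in D(z_0, r/2)$ and $r_{n+1} < r/2$, whence $D(a_{n+1}, r_{n+1}) \subset D(z_0, r)$. Restricting produces a $\pi_0$-graph over $D(a_{n+1}, r_{n+1})$ inside $X \subset X_{n+1}$, contradicting Lemma~\ref{lem:construction}(ii).
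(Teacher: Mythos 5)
Your treatment of the main point -- the absence of analytic disks -- is essentially the paper's argument in a slightly different packaging: the paper splits according to whether $z\circ\phi$ is constant (a disk in a vertical line being automatically a $\pi_1$-graph, since $\pi_1=z_0+w/100$ is injective on it), while you split according to which of $\pi_0\circ\phi$, $\pi_1\circ\phi$ is nonconstant; the two dichotomies are equivalent, and you use the same inputs (density of the two subsequences of $(a_n)$, $r_n\cv 0$, Lemma \ref{lem:construction}(ii), and the fact that both projections of the disk land in $D(0,1/4)$), with the parities matched correctly. Where you go beyond the paper is polynomial convexity, which the paper dismisses as obvious: your route -- each $\set{\abs{P_{n,s}}\le\delta_n}$ cut by the polydisk is a compact polynomial polyhedron, siblings under a common ancestor $s$ are separated by the single polynomial $P_{n,s}$ into the pairwise disjoint closed disks $\overline{D(\sigma,\delta_n/m_{n+1})}$, a Kallin-type separation lemma is then propagated up the tree, and finally one passes to the decreasing intersection -- is sound and is indeed the genuine content hiding behind the word ``obvious''.

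One assertion in your first step is incorrect, however: $X$ is not compact, and $X\neq\bigcap_n\overline{X_n}$ if closures are taken in $\cc^2$. The inclusion $\overline{X_{n+1}}\subset X_n$ in Lemma \ref{lem:construction}(i) is a closure relative to the open bidisk $D(0,1/2)\times\dd$, and by item (iii) every fiber of $X_n$ over $z_0\in D(0,1/2)$ is nonempty, so $X$ projects onto all of $D(0,1/2)$ and its closure in $\cc^2$ meets $\set{\abs z=1/2}\times\overline\dd$. Hence $X$ is only a relatively closed horizontal subset of the open bidisk, and both the hulls and the ``decreasing intersection of polynomially convex compacts'' step must be applied to the truncations $X\cap(\overline{D(0,r)}\times\overline\dd)$ for $r<1/2$ (which is also how ``polynomially convex in $D(0,1/2)\times\dd$'' should be read). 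This is a routine repair and does not affect your Kallin/tree-induction argument, but as written the compactness claim is false.
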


\begin{proof}
The horizontality and polynomial convexity of $X$ are obvious.  By items {\em (i.)} and {\em (ii.)} --applied to odd integers-- 
of  the previous lemma,   it is clear that $X\cap \set{\abs{z}<1/4}$ does not contain any piece of  holomorphic graph over the $z$ coordinate. So any holomorphic disk contained in  
 $X\cap \set{\abs{z}<1/5}$ must be contained in a vertical line.
 Then this would be a graph over a certain open subset of $D(0,1/4)$, relative  to  the projection $\pi_1$,
 which again is impossible, still due to {\em (ii.)}. Thus $X\cap \set{\abs{z}<1/5}$ contains no holomorphic disk.
\end{proof}

Shortly we shall see  that $X$ carries a natural positive closed current $T$. What we do in the next sections  is
 to choose  the parameters carefully so that $T$ is as regular as possible. Notice that with our presentation
the free parameters are the sequences $(r_n)$ and $(m_n)$.

\section{A current  with continuous potential on $X$}\label{sec:continuous}

In this section we construct the currents $T$ associated to our Wermer examples and give their first properties. 
 The  precise regularity statement leading to Theorem \ref{thm:wermer}
  will be proven afterwards. 
Of course, to get the actual statement of  Theorem \ref{thm:wermer} 
 it is enough to restrict the conclusions of the foregoing results  to $D(0,1/5)\times\dd$ and rescale. 
We refer the reader to \cite{dem, klimek} for background on positive closed currents and psh functions.

\begin{thm}\label{thm:wermer_continuous}
Let the polynomials $(P_{n,s})_{s\in \mathcal{S}_n, \ n\in \nn}$ 
and the sequence  $(\delta_n)$ be defined as in the previous section. Consider the sequence of psh functions 
$$u_n= \unsur{2^n\#\mathcal{S}_n} \sum_{s\in \mathcal{S}_n}  \log\max(\abs{P_{n,s}}, \delta_n).$$ Then, if 
\begin{equation}\label{eq:rn}
 \sum_{n=1}^\infty \frac{\abs{\log r_n}} {2^n} <\infty,
\end{equation}
 the sequence of currents $T_n = dd^cu_n$ converges to a horizontal positive closed current  $T$ such that 
\begin{itemize}
 \itm $T$ has continuous potential and $T\wedge T=0$;
 \itm $\supp (T) \cap  (D(0,1/5)\times\dd)$ does not contain any holomorphic disk;
 \itm $T$ is uniformly laminar in $((D(0,1/2)\setminus D(0,3/8))\times \dd$.
\end{itemize}
\end{thm}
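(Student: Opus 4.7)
\emph{Overview.} The plan is to show that $u_n\to u$ uniformly on $D(0,1/2)\times\dd$, so that $u$ is a continuous psh function and $T=dd^cu=\lim T_n$, and then to verify the three listed properties in turn using Lemma \ref{lem:construction}.

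\emph{Uniform convergence.} Using $\#\cS_{n+1}=\#\cS_n\cdot\#\Sigma_{n+1}$, one writes
\[u_{n+1}-u_n=\frac{1}{2^n\#\cS_n}\sum_{s\in\cS_n}B_s,\]
where
\[B_s=\frac{1}{2\#\Sigma_{n+1}}\sum_{\sigma\in\Sigma_{n+1}}\log\max(|(P_{n,s}-\sigma)^2-\e_{n+1}A_{n+1}|,\delta_{n+1})-\log\max(|P_{n,s}|,\delta_n).\]
The aim is to establish $|B_s|=O(|\log r_{n+1}|)$ uniformly, which by \eqref{eq:rn} yields $\sum_n\|u_{n+1}-u_n\|_\infty<\infty$. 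To do so, factor $(P_{n,s}-\sigma)^2-\e_{n+1}A_{n+1}=(P_{n,s}-\sigma-a)(P_{n,s}-\sigma+a)$ with $|a|=|\e_{n+1}A_{n+1}|^{1/2}\lesssim\delta_n/m_{n+1}$ (locally, where $A_{n+1}\neq 0$); ignoring the $\delta_{n+1}$ floor, the $\sigma$-average becomes a Riemann sum for the logarithmic potential $\frac{1}{\pi\delta_n^2}\int_{D(0,\delta_n)}\log|P_{n,s}\pm a-\sigma|\,dA(\sigma)$, which by the classical formula equals $\log\max(|P_{n,s}|,\delta_n)$ up to $O(1)$. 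The perturbation $-\e_{n+1}A_{n+1}$ and the cutoff at $\delta_{n+1}$ only affect the $O(1)$ lattice points $\sigma$ with $|P_{n,s}-\sigma|\lesssim\sqrt{\e_{n+1}}$; for each of them the discrepancy from the ideal value is at most $|\log(\delta_{n+1}/\e_{n+1})|=|\log(r_{n+1}/2)|$, and averaging over $\#\Sigma_{n+1}$ points gives the claimed bound.

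\emph{Vanishing of $T\wedge T$.} By induction on $n$ the sets $X_{n,s}$, $s\in\cS_n$, are pairwise disjoint: the subdivision step separates the intermediate sets $X^{\mathrm{int}}_{n+1,s,\sigma}$ for different $\sigma$ because $P_{n,s}$ is single-valued and the disks $D(\sigma,\delta_n/m_{n+1})$ have disjoint closures, while Lemma \ref{lem:construction}(i.) confines $X_{n+1,s'}$ inside $X^{\mathrm{int}}_{n+1,s,\sigma}$. Consequently the currents $dd^c\log\max(|P_{n,s}|,\delta_n)$ have pairwise disjoint supports (inside $\overline{X_{n,s}}$), so their cross-wedges vanish. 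Furthermore each $\log\max(|P_{n,s}|,\delta_n)$ is locally either pluriharmonic (equal to $\log|P_{n,s}|$, a $\log$ of a non-vanishing holomorphic function on $\{|P_{n,s}|>\delta_n\}$) or equal to the constant $\log\delta_n$, so its own Monge--Amp\`ere mass vanishes. Thus $(dd^cu_n)^2=0$ for all $n$, and Bedford--Taylor continuity of the Monge--Amp\`ere operator under uniform convergence of bounded continuous psh functions gives $(dd^cu)^2=0$.

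\emph{Support, laminarity, main obstacle.} From $\supp T_n\subset\overline{X_n}$ together with $\overline{X_{n+1}}\subset X_n$ (Lemma \ref{lem:construction}(i.)) one obtains $\supp T\subset X$, so Proposition \ref{prop:wermer} immediately rules out holomorphic disks in $\supp T\cap(D(0,1/5)\times\dd)$; horizontality is automatic. For uniform laminarity in $(D(0,1/2)\setminus D(0,3/8))\times\dd$, Lemma \ref{lem:construction}(iii.) shows that in this annulus each $\{P_{n,s}=\alpha\}$ with $|\alpha|<2\delta_n$ is a disjoint union of $2^n$ horizontal holomorphic graphs and distinct $s$ yield disjoint varieties; thus $T_n$ there is a convex combination of integration currents on a finite, pairwise disjoint family of horizontal graphs, and passage to the limit along the nested family produces a lamination by horizontal holomorphic graphs (parametrized by a Cantor-like transversal $X\cap\{z=z_0\}$) whose integration currents realize $T$. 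The main technical difficulty in the entire argument is the uniform bound $|B_s|=O(|\log r_{n+1}|)$: one has to control how the cutoff-induced and perturbation-induced errors concentrate on the same small cluster of $\sigma$'s near $P_{n,s}$, and show that their combined cost is bounded by a single $|\log r_{n+1}|$ factor per bracket rather than being amplified by the log-singularity of the integrand or by the number of lattice points.
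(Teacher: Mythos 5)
Your overall strategy coincides with the paper's: telescope the potentials, prove a uniform bound of size $O(\abs{\log r_{n+1}}/2^n)$ for each increment so that \eqref{eq:rn} gives uniform convergence, and then obtain the absence of disks from $\supp T\subset X$, laminarity near $\abs{z}=1/2$ from the graph structure in Lemma \ref{lem:construction}(iii.), and $T\wedge T=0$ by a limiting argument. Where you genuinely differ is in the implementation of the key estimate. The paper inserts the intermediate potential $v_{n+1}$ of the subdivision step, compares $v_{n+1}$ with $u_n$ through Lemma \ref{lem:circles} (uniform boundedness of the potentials of the circle-lattice measures $\nu_k$, whose potentials are exactly the truncated logarithms), and compares $u_{n+1}$ with $v_{n+1}$ by a three-region analysis, using harmonicity of $\tfrac12 u_{n+1,s'}-v_{n+1,s'}$ on $X^{\mathrm{int}}_{n+1,s,\sigma}\setminus X_{n+1,s'}$ and the maximum principle. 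You do it in one stroke: factor $(P_{n,s}-\sigma)^2-\e_{n+1}A_{n+1}$ pointwise into two linear factors and read the $\sigma$-average as a Riemann sum for the area-logarithmic potential of $D(0,\delta_n)$, with the cutoff at $\delta_{n+1}$ costing $O(\abs{\log(r_{n+1}/2)})$ only at the $O(1)$ lattice points nearest to the value of $P_{n,s}$. This is sound and yields the same bound; note however that the uniform Riemann-sum comparison (uniformly in the evaluation point, including the near-singular cell and the boundary cells of the lattice) requires exactly the kind of care that goes into Lemma \ref{lem:circles}, and you assert it rather than prove it. In exchange you dispense with the intermediate potential and the maximum-principle step.

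One written justification is not valid as stated: your argument that $(dd^cu_n)^2=0$ because each $\log\max(\abs{P_{n,s}},\delta_n)$ is ``locally either pluriharmonic or constant.'' The measure $\left(dd^c\log\max(\abs{P_{n,s}},\delta_n)\right)^2$ would live precisely on $\set{\abs{P_{n,s}}=\delta_n}$, where neither alternative holds, and the underlying principle (locally pluriharmonic off a closed set with empty interior implies vanishing Monge--Amp\`ere mass) is false: $\max(\log\abs{z},\log\abs{w})$ is locally pluriharmonic off $\set{\abs{z}=\abs{w}}$, yet its Monge--Amp\`ere measure is a nonzero multiple of the Dirac mass at the origin. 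The fact you need is nevertheless true and quickly proved: $\log\max(\abs{P_{n,s}},\delta_n)=v\rond P_{n,s}$ with $v(t)=\log\max(\abs{t},\delta_n)$ subharmonic on $\cc$; for smooth mollifications $v_\epsilon$, $dd^c(v_\epsilon\rond P_{n,s})$ is a nonnegative function times $i\,\fr P_{n,s}\wedge\overline{\fr P_{n,s}}$, whose square vanishes since $\fr P_{n,s}\wedge\fr P_{n,s}=0$, and uniform convergence passes this to the limit. Alternatively, follow the paper and avoid the issue entirely: $\supp T_n=\fr X_n$ and $\supp T\subset X$ are disjoint, so $T_n\wedge T=0$, and uniform convergence of the potentials gives $T\wedge T=0$ (your induction showing that the sets $\overline{X_{n,s}}$ are pairwise disjoint is correct and supports either route). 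A smaller slip: in the annulus, $T_n$ is not a finite convex combination of graph currents (it has continuous potential); it is the average over $\theta\in[0,2\pi)$ of the currents of integration on the disjoint graphs $\set{P_{n,s}=\delta_ne^{i\theta}}$. This is still a uniformly laminar family, and the passage to the limit is the classical argument of \cite{bls} invoked in the paper, so the laminarity conclusion stands.
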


Notice that the result does not depend on $(m_n)$ so it holds for the ordinary (i.e. without subdivision) Wermer construction. 
See below \S \ref{subs:ordinary} for some comments on the regularity in this case.

\begin{proof}
 Recall the notation $X_n = \bigcup_{s\in\mathcal{S}_n} \set{\abs{P_{n,s}}<\delta_n}$ and $X= \bigcap X_n$. 
It is clear that $T_n$ is a sequence of currents with locally uniformly bounded masses, and for the moment we 
let $T$ be a cluster value of this sequence. Since 
 $\supp(T_n)$ is contained in  $\fr X_n$, $T$ has support in $X$, hence  
$\supp (T) \cap  (D(0,1/5)\times\dd)$ does not contain any holomorphic disk.

 It follows from the well known 
formula $\log^+\abs{x} = \int \log\abs{x-e^{i\theta}} d\theta$ that we have the integral representation 
$$\log\max(\abs{P_{n,s}}, \delta_n) = \int_{\re/2\pi\zz} \log\abs{P_{n,s} - \delta_n e^{i\theta}} d\theta, $$ whence $$
T_n = \unsur{2^n\#\mathcal{S}_n} \sum_{s\in \mathcal{S}_n}  \int_{\re/2\pi\zz}\left[P_{n,s} = \delta_n e^{i\theta}\right]d\theta,$$ 
From Lemma \ref{lem:construction}{(iii.)} we know that  the varieties $P_n = \delta_n e^{i\theta}$ 
are graphs over $\set{\frac38<\abs{z}<\frac12}$. It is classical 
 (see e.g. \cite{bls}) that in this situation the laminar structure passes to the limit, thus $T$ is uniformly laminar in 
$((D(0,1/2)\setminus D(0,3/8))\times \dd$.

Assume for the moment that $(u_n)$ converges uniformly, and let us see why $T\wedge T= 0$. Indeed for every $n$, $\supp(T_n) = \fr X_n$ while 
$\supp(T)\subset X$, hence  from $\fr X_n\cap X=\emptyset$, we get that  $T_n\wedge T = 0$. By uniform convergence of the potentials, we conclude that $T\wedge T=0$ (another argument is that $T_n\wedge T_n=0$ for all $n$).

\medskip

The main step is therefore to prove that  $(u_n)$ converges uniformly. The following lemma will be required (see below for the proof). 

\begin{lem}\label{lem:circles}
  For $k\leq 2$, let $\Sigma^{(k)} = \frac3k\zz^2\cap \overline{D}(0,1-\frac1k)$ and $$\nu_k =
 \unsur{\#\Sigma^{(k)}}\sum_{\sigma \in  \Sigma^{(k)}}\frac{k}{2\pi}\left[\fr D\left(\sigma, \unsur{k}\right)\right],
$$ where
 $[\fr D(\sigma, \unsur{k})]$ denotes 1-dimensional Hausdorff measure on   $\fr D(\sigma, \unsur{k})$. By convention, let $\nu_1 = \unsur{2\pi}[\fr\dd]$.
 Then  $(\nu_k)$ is a sequence of probability measures converging to the normalized Lebesgue measure on $\dd$ and having locally
uniformly bounded logarithmic potentials.
\end{lem}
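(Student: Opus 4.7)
The plan is to split the lemma into two parts—weak convergence to $\lambda:=\unsur{\pi}\mathbf{1}_{\dd}dA$, and a local uniform bound on the logarithmic potentials $U_{\nu_k}(z):=\int\log\abs{z-w}d\nu_k(w)$—each handled by a different ingredient. The common starting point is the Jensen/mean-value identity
\[
\unsur{2\pi}\int_0^{2\pi}\log\abs{z-\sigma-\tfrac{1}{k}e^{i\theta}}\,d\theta \;=\; \log\max\bigl(\abs{z-\sigma},\tfrac{1}{k}\bigr),
\]
combined with the fact that $\Sigma^{(k)}$ is a $(3/k)$-lattice inside $\overline{\dd}$ of cardinality $\#\Sigma^{(k)}\sim\pi k^2/9$. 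The fact that $\nu_k$ is a probability measure is then immediate, since each $\frac{k}{2\pi}[\fr D(\sigma,1/k)]$ has total mass $1$.

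For the weak convergence, I would fix $f\in C_c(\cc)$ and write $\int f\,d\nu_k=\unsur{\#\Sigma^{(k)}}\sum_{\sigma}\unsur{2\pi}\int_0^{2\pi}f(\sigma+\tfrac{1}{k}e^{i\theta})d\theta$. Uniform continuity of $f$ replaces the inner circular average by $f(\sigma)$ with an error that is $o(1)$ uniformly in $\sigma$. What remains, $\unsur{\#\Sigma^{(k)}}\sum_\sigma f(\sigma)$, is the standard Riemann-sum approximation of $\unsur{\pi}\int_\dd f\,dA$ by the equispaced grid $\Sigma^{(k)}$, whence $\nu_k\cvf\lambda$.

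For the uniform potential bound, the mean-value identity above gives $U_{\nu_k}(z)=\unsur{\#\Sigma^{(k)}}\sum_\sigma\log\max(\abs{z-\sigma},\tfrac{1}{k})$. The upper bound $U_{\nu_k}(z)\leq \log(1+\abs{z})$ is immediate. For a lower bound I would compare each circle measure $\mu_\sigma^k:=\frac{k}{2\pi}[\fr D(\sigma,\tfrac{1}{k})]$ with the normalized area measure $\bar\mu_\sigma^k$ on $D(\sigma,\tfrac{1}{k})$. A direct computation shows that $U_{\mu_\sigma^k}(z)-U_{\bar\mu_\sigma^k}(z)=\frac{1-k^2\abs{z-\sigma}^2}{2}$ for $z\in D(\sigma,\tfrac{1}{k})$ and $0$ elsewhere, hence lies in $[0,\tfrac{1}{2}]$. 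Crucially, the lattice spacing $3/k$ exceeds the disk diameter $2/k$, so the disks $D(\sigma,\tfrac{1}{k})$ are pairwise disjoint and at most one term in $\sum_\sigma[U_{\mu_\sigma^k}-U_{\bar\mu_\sigma^k}]$ is nonzero at any given $z$. Thus $\abs{U_{\nu_k}(z)-U_{\bar\nu_k}(z)}\leq\frac{1}{2\#\Sigma^{(k)}}$, where $\bar\nu_k:=\unsur{\#\Sigma^{(k)}}\sum_\sigma\bar\mu_\sigma^k$. Since $\bar\nu_k$ is supported in $\overline{\dd}$ with Lebesgue density bounded by $\frac{k^2}{\pi\#\Sigma^{(k)}}=O(1)$, its logarithmic potential is continuous and locally uniformly bounded by the standard estimate for potentials of $L^\infty$-densities on bounded sets.

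I do not foresee any serious obstacle; the one step requiring genuine care is verifying that the cumulative error of the circle-to-disk replacement stays $O(1/\#\Sigma^{(k)})$, which is precisely where the disjointness forced by the $3/k$ spacing (rather than the merely tangent $2/k$) is used. Everything else reduces to Riemann-sum convergence and the standard continuity of potentials of bounded-density measures.
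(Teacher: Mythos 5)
Your proof is correct, but the key step is handled by a genuinely different mechanism than in the paper. For the potential bound, the paper proves a uniform mass estimate $\nu_k(D(z,r))\leq Cr$ by counting the small circles meeting $D(z,r)$ in three regimes ($r\gg 1/k$, $r\asymp 1/k$, $r\ll 1/k$) and then sums over dyadic annuli $\{2^{-q-1}\leq\abs{z-\zeta}<2^{-q}\}$ to bound $\int\abs{\log\abs{z-\zeta}}\,d\nu_k$. You instead regularize: you replace each unit-mass circle measure by the normalized area measure on the corresponding disk, use the explicit formula $U_{\mu_\sigma^k}-U_{\bar\mu_\sigma^k}=\frac{1-k^2\abs{z-\sigma}^2}{2}$ inside $D(\sigma,\frac1k)$ (and $0$ outside), which I checked is correct, and invoke disjointness of the disks (spacing $3/k>2/k$) to get $\abs{U_{\nu_k}-U_{\bar\nu_k}}\leq\frac{1}{2\#\Sigma^{(k)}}$, reducing everything to the standard boundedness of potentials of measures with uniformly bounded density on $\overline{\dd}$ (note that even without disjointness the error would still be $\leq\frac12$, so boundedness survives; disjointness only sharpens the constant). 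What the two approaches buy: yours is shorter and avoids the case analysis entirely, at the price of requiring the explicit circle-versus-disk potential computation and the observation $\#\Sigma^{(k)}\gtrsim k^2$; the paper's argument is more robust and yields the quantitative linear bound $\nu_k(D(z,r))\leq Cr$, which (as the author remarks) gives more information than mere boundedness, though it is not needed later. The weak convergence and probability-measure parts are handled essentially as in the paper (there they are simply declared obvious), via the mean-value identity and Riemann sums, and are fine.
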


To prove uniform convergence, we estimate $\abs{u_{n+1}-u_n}$. Let $v_{n+1}$ be the potential corresponding to the intermediate ``subdivision'' step. Using the  notation $s' = (s,\sigma)\in \mathcal{S}_n\times \Sigma_{n+1} = \mathcal{S}_{n+1}$ as in the previous section we have that 
$$v_{n+1} = 
\unsur{2^n \# \mathcal{S}_{n+1}} \sum_{s' = (s,\sigma)\in \mathcal{S}_{n+1}}\log\max\left(\abs{P_{n,s}-\sigma}, \frac{\delta_n}{m_{n+1}}\right).$$ Write $u_{n+1}-u_n = (u_{n+1}-v_{n+1}) + (v_{n+1}-u_n)$. 

\medskip

The second part of this equality is estimated using Lemma \ref{lem:circles} as follows:
$$v_{n+1}-u_n = \unsur{2^n\#\mathcal{S}_n} \sum_{s\in \mathcal{S}_n} 
\left( \frac{1}{\# \Sigma_{n+1}} \sum_{\sigma \in \Sigma_{n+1}} \log\max\left(\frac{\abs{P_{n,s}-\sigma}}{\delta_n}, \unsur{m_{n+1}}\right)
-  \log\max\left(\frac{\abs{P_{n,s}}}{\delta_n}, 1\right)\right).$$ Let $L_k$ be the logarithmic potential of $\nu_k$. We have that 
\begin{equation}\label{eq:vu}
 v_{n+1}-u_n  = \unsur{2^n\#\mathcal{S}_n} \sum_{s\in \mathcal{S}_n} (L_{m_{n+1}}- L_1)\rond\left(\frac{P_{n,s}}{\delta_n} \right)= O\left(\unsur{2^n}\right), 
 \end{equation}
where the second equality follows from Lemma \ref{lem:circles}.

\medskip

Now the first part writes as 
\begin{equation}\label{eq:uv}
 u_{n+1}-v_{n+1} = \unsur{2^n \# \mathcal{S}_{n+1}} \sum_{s' = (s,\sigma)\in \mathcal{S}_{n+1}}\!\!\left(
\unsur{2} \log\max\left(\abs{P_{n+1,s'}},\delta_{n+1}\right)  -
\log\max\left(\abs{P_{n,s}-\sigma}, \frac{\delta_n}{m_{n+1}}\right)\right).
\end{equation}
Let  $u_{n+1, s'} = \log\max(\abs{P_{n+1,s'}},\delta_{n+1})$ and $v_{n+1,s'} = \log\max\left(\abs{P_{n,s}-\sigma}, \frac{\delta_n}{m_{n+1}}\right)$, and recall the sets $X_{n+1,s'}$ and $X^{\mathrm{int}}_{n+1, s,\sigma}$  from Section \ref{sec:wermer}. 
We  give a uniform estimate of the quantity $\frac12 u_{n+1, s'} - v_{n+1,s'}$ 
in a vertical slice $\set{z=z_0}$.  In such a slice we have $X_{n+1,s'}\Subset X^{\mathrm{int}}_{n+1,s,\sigma} \Subset \dd$. 
Abusing notation we write $w$ for $(z_0,w)$. 

If $w\in \overline{X_{n+1,s'}}$, $v_{n+1,s'}(w) =\log \frac{\delta_n}{m_{n+1}}$ and  $u_{n+1}(w) = \log\delta_{n+1}$. Since $\delta_{n+1} = \delta_n^2r_n/4m_{n+1}^2$ we infer that 
\begin{equation}\label{eq:harmo1}
 \abs{\frac12 u_{n+1,s'}(w)-v_{n+1,s'}(w)} \leq  \abs{\log{r_n}}.
\end{equation}

If $w \notin X^{\mathrm{int}}_{n+1, s,\sigma}$, $u_{n+1,s'}(w) =  \log \abs{P_{n+1,s'}}$ and  $v_{n+1,s'}(w) =  \log\abs{P_{n,s}-\sigma}$ 
with $\abs{P_{n,s}-\sigma}\geq \frac{\delta_n}{m_{n+1}}$. Using the equality $P_{n+1,s'} = (P_{n,s}-\sigma)^2-\e_{n+1}A_{n+1}$ we infer
$$\abs{\frac12 u_{n+1,s'}-v_{n+1,s'}} = \frac12 \log \abs{1-\frac{\e_{n+1}A_{n+1}}{(P_{n,s}-\sigma)^2} }.$$
In $D(0, 1/2)\times \dd$, we have $\abs{A_{n+1}}\leq 1$ so from the definition of $\e_{n+1}$ we deduce that 
 for $w \notin X^{\mathrm{int}}_{n+1, s,\sigma}$,  
$\displaystyle{\abs{ \frac{\e_{n+1}A_{n+1}}{(P_{n,s}-\sigma)^2} }\leq \frac{1}{2}}$. We conclude that outside $X^{\mathrm{int}}_{n+1, s,\sigma}$ we have
\begin{equation}\label{eq:harmo2}
 \abs{\frac12 u_{n+1,s'}-v_{n+1,s'}} \leq \unsur{2}\log 2.
\end{equation}

In $X^{\mathrm{int}}_{n+1, s,\sigma}\setminus X_{n+1,s'}$,  $\unsur{2}u_{n+1,s'} - v_{n+1,s'}$ is harmonic and the two previous cases 
  give us a bound for this function on $\fr X^{\mathrm{int}}_{n+1, s,\sigma} \cup \fr X_{n+1,s'}$. So by \eqref{eq:harmo1},  \eqref{eq:harmo2} and 
the  maximum principle we get  that  
 $\abs{\unsur{2}u_{n+1,s'} - v_{n+1,s'}}\leq  \abs{\log {r_n}}$ there. 
 
 Summarizing the 3 cases we see that the estimate
\eqref{eq:harmo1} holds throughout $D(0,1/2)\times\dd$.

Finally, using  \eqref{eq:uv}  we conclude that $\abs{u_{n+1} - v_{n+1}} \leq \frac{\abs{\log r_n}}{2^n}$. Together with \eqref{eq:vu} this implies that 
   $\abs{u_{n+1} - u_n} = O\left(\frac{\abs{\log r_n}}{2^n}\right)$ and concludes the proof of the theorem.
\end{proof}

\begin{proof}[Proof of Lemma \ref{lem:circles}] The proof is easy so we rather sketch it.
That $(\nu_k)$ converges  to Lebesgue measure in $\dd$ is obvious so we focus on the statement on the logarithmic potentials. 
The logarithmic potential of $\nu_k$ is given by the formula
$$\unsur{\#\Sigma^{(k)}}\sum_{\sigma \in \Sigma^{(k)}}\log\max\left(\abs{z-\sigma}, \unsur{k}\right)$$
 It is enough to prove that there exists a constant $C$ such that for every $z\in \dd$ and every $r>0$, $\nu_k(D(z,r))\leq Cr$ (this of course gives more information about the convergence but we will not need it). Indeed if this estimate holds, then for $z\in \dd$
$$\abs{\int \log \abs{z-\zeta} d\nu_k(\zeta)} 
\leq \sum_{q=0}^\infty \int_{\set{2^{-q-1}\leq\abs{z-\zeta} < {2^{-q}}}}\abs{ \log \abs{z-\zeta}} d\nu_k(\zeta) \leq \log 2 + \sum_{q=0}^\infty 
C\frac{q+1}{2^q} .$$

Given such $z$ and $r$ there are three possible  
cases. Either $r\gg \unsur{k}$, say, $r\geq \frac{100}{k}$, and the number of small circles intersecting $D(z,r)$ is bounded above by $\frac{\pi}{9}r^2k^2$ up to an error of order of magnitude of $k$ times the length of $\fr D(z,r)$, that is, $O(kr)$, with $kr\leq k^2r^2/100$. Notice also that 
$\#\Sigma^{(k)}\sim \frac{k^2\pi}{9}$. In this case we conclude that $\nu_k(D(z,r))\leq 2r^2$.

The second case is when  $\frac{1}{100k}\leq r \leq \frac{100}{k}$. Then we simply argue that the number of 
small circles intersecting $D(z,r)$ is  bounded by a constant (approximately $\frac{\pi}{9}100^2$), thus 
$\nu_k(B(z,r)\leq O(1)/\#\Sigma^{(k)} = O(1/k^2) = O(r^2)$. 

The last situation is when $r\leq \frac{1}{100k}$. In this case the intersection of $D(z,r)$ with the family of small circles, if nonempty, is a piece of a small circle of length $O(r)$. We conclude that $\nu_k(D(z,r)) = O(r/k^2)$ hence $O(r)$.
\end{proof}

\section{Precise regularity of the potential}\label{sec:regularity}

Let $\psi$ be a continuous increasing function defined in a neighborhood of $0\in \re^+$, with $\psi(0)=0$.  From now on such functions will be referred to as {\em gauge functions}. We say that a function is $C^{1+\psi}$ if it is $C^1$ and its derivatives have modulus of continuity 
$O(\psi)$. Of course $C^{1,\alpha}$ regularity  corresponds  to $\psi: r\mapsto r^\alpha$.  

\medskip

In this section we prove the following  refined  version of Theorem \ref{thm:wermer}.

\begin{thm}
\label{thm:refined}
Let $(P_{n,s})$, $(m_n)$ and $(r_n)$ be as defined in Section \ref{sec:wermer}. Assume that $(r_n)$ satisfies  \eqref{eq:rn} and let 
$T$ be the current  of Theorem  \ref{thm:wermer_continuous}.

Let $\psi$ be any gauge function such   that
 $\frac{\psi(r)}{r\abs{\log r}}\cv \infty$ as $r\cv 0$. 
 
 Then it is possible to choose $(m_n)$ so that the potential of $T$ is of class 
  $C^{1+\psi}$. 
 \end{thm}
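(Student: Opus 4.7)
The approach is a telescoping estimate on the partial sums $u_n$: writing $u = u_0 + \sum_{n\geq 0}\varphi_n$ with $\varphi_n := u_{n+1}-u_n$, I would produce for each $n$ three numerical bounds --- an $L^\infty$ bound $a_n \geq \|\varphi_n\|_\infty$, a uniform gradient bound $c_n\geq \|\nabla \varphi_n\|_\infty$, and a bound $b_n$ on the Lipschitz constant of $\nabla \varphi_n$ (either pointwise away from $\partial X_{n+1}$, or after suitable regularization). For any $x,y$ with $|x-y|=\delta$ the interpolation
\begin{equation*}
|\nabla u(x) - \nabla u(y)| \leq \delta\sum_{n\leq N} b_n + 2\sum_{n>N} c_n, \qquad N = N(\delta),
\end{equation*}
then produces an explicit modulus $\Phi$ for $\nabla u$ in terms of $(a_n,b_n,c_n)$, and hence in terms of the free sequence $(m_n)$. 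The theorem reduces to showing that every gauge $\psi$ with $\psi(r)/(r|\log r|)\to\infty$ dominates $\Phi$ for some admissible choice of $(m_n)$.

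The bound $a_n = O(|\log r_n|/2^n)$ is exactly the one extracted from Theorem \ref{thm:wermer_continuous}, and is independent of $m_n$ thanks to the $m_{n+1}$-cancellation hidden in \eqref{eq:harmo1}. The nontrivial work lies in obtaining $c_n$ and $b_n$. The natural tool is the integral representation
\begin{equation*}
u_n = \frac{1}{2^n\#\mathcal{S}_n}\sum_{s\in\mathcal{S}_n}\int_{\re/2\pi\zz}\log\!\left|P_{n,s} - \delta_n e^{i\theta}\right|\,d\theta,
\end{equation*}
differentiated under the averages. Pointwise bounds on each summand diverge: $P_{n,s}$ has degree $2^n$ in $w$ and $\delta_n$ shrinks super-exponentially. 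The saving must come from two mechanisms of cancellation: the averaging in $\theta$, and the averaging over $\sigma\in\Sigma_{n+1}$ at the subdivision step. The latter is, by Lemma~\ref{lem:circles}, equivalent up to a universal logarithmic factor to a smooth convolution on $D(0,\delta_n)$. This is the point where the subdivision parameter $m_n$ enters the estimates: increasing $m_n$ shrinks the effective radius $\e_{n+1}^{1/2} = \delta_n/(m_{n+1}\sqrt{2})$ of each ramification cluster and improves $b_n$ by a power of $m_n$, at only a mild cost in $c_n$.

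The main obstacle is to carry out this bookkeeping sharply enough that the only unavoidable loss is the single $\log$ factor coming from Lemma~\ref{lem:circles}, which is precisely the source of the borderline $r|\log r|$ appearing in the hypothesis on $\psi$. Once $(a_n,b_n,c_n)$ are expressed in closed form in the variables $(m_k)_{k\leq n}$, the conclusion reduces to an elementary tuning step: given $\psi$ satisfying the hypothesis, build $(m_n)$ inductively, choosing $m_{n+1}$ large enough that the contribution at spatial scale $\delta_{n+1}$ is beaten by $\psi(\delta_{n+1})$, yet not so large that \eqref{eq:rn} fails. The strict inequality $\psi(r)/(r|\log r|)\to\infty$ is precisely the slack that makes such a scale-by-scale choice possible, and so closes the argument.
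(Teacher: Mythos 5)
Your plan has a genuine gap, and it sits exactly where the paper has to work hardest: the interior of the bidisk. Your telescoping scheme requires uniform bounds $c_n$, $b_n$ on $\nabla\varphi_n$ and on its Lipschitz constant \emph{throughout} $D(0,1/2)\times\dd$, obtained by differentiating the integral representation and invoking cancellation from the $\theta$- and $\sigma$-averages. But such bounds are essentially equivalent to scale-by-scale density estimates for $\Delta u$ near every point, and nothing in the construction gives these directly in the region $\set{\abs{z}<3/8}$: there the varieties $\set{P_{n,s}=\alpha}$ are ramified (this is the whole point of the Wermer mechanism), the level sets $\set{\abs{P_{n,s}}=\delta_n}$ in an interior slice are not unions of boundedly distorted round components, and the averaging over $\Sigma_{n+1}$ (Lemma \ref{lem:circles}) only controls potentials up to $O(1)$ errors --- enough for the uniform convergence in Theorem \ref{thm:wermer_continuous}, but carrying no second-order information. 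The paper does \emph{not} attempt your direct estimate in the interior. It proves the density bound $\mu(B(p,r))\leq Ch(r)$ only on slices $\pi^{-1}(z_0)$ with $\frac25\leq\abs{z_0}\leq\frac{4}{10}$, where Lemma \ref{lem:construction}\,(iii) gives an unramified graph structure and the Koebe distortion theorem applies (Propositions \ref{prop:distortion}--\ref{prop:slice}); it then \emph{transfers} these bounds to the whole bidisk using the homogeneous Monge--Amp\`ere equation, via Jensen's formula for $T_{\zeta,r}u$ and the Bedford--Taylor maximum principle \cite[Theorem 6.4]{bt}, which forces $\sup_\om T_{\zeta,r}u$ to be attained near $\fr\om$. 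Without this transfer mechanism (or a genuinely new interior estimate, which you do not supply), your bookkeeping cannot be closed; the statement that increasing $m_n$ ``improves $b_n$ by a power of $m_n$ at only a mild cost in $c_n$'' is precisely the unproven heart of the matter in the interior.

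Two secondary inaccuracies reinforce that the structure has been missed. First, the borderline modulus $r\abs{\log r}$ does not come from Lemma \ref{lem:circles}: it comes from the potential-theoretic conversion of density bounds into gradient moduli (the term $r\int_r^1 h(s)s^{-3}\,ds$ in Proposition \ref{prop:modulus}, which already yields $r\abs{\log r}$ when $h(r)=r^2$). Second, there is no tension between large $m_n$ and \eqref{eq:rn}: that condition involves only $(r_n)$, and the paper explicitly notes that Theorem \ref{thm:wermer_continuous} is independent of $(m_n)$; the sequence $(m_n)$ is taken super-exponentially large and then inductively as large as needed, with no upper constraint. So the final ``tuning'' step you describe is not where the difficulty lies.
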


By choosing $\psi$ with  $\psi(r) = o(r^\alpha)$ for all $0<\alpha<1$ 
 (e.g. $\psi(r)= r\abs{\log r}^2$), we get  the conclusions of Theorem \ref{thm:wermer}.
It is likely that the requirement on $\psi$ could be upgraded to $\frac{\psi(r)}{r}\cv \infty$.

\subsection{Regularity of subharmonic functions}
The estimate on regularity will ultimately be a consequence of the following --presumably well known-- result. 

\begin{prop}\label{prop:modulus}
 Let $u$ be a subharmonic function in $\re^n$, with Laplacian $\Delta u = \mu$. Assume that there exists a constant $C$ such that 
  for every $x\in \re^n$ and $0<r<1$,
\begin{equation}\label{eq:density}
\mu(B(x,r)) \leq C r^{n-2} h(r),
\end{equation}
where $h$ is a nonnegative increasing function satisfying  $\int_0 \frac{h(r)}{r^2}dr<\infty$.

Then $u$ is $C^{1+\psi}$, with 
\begin{equation}\label{eq:modulus}
\psi (r) = \int_0^{2r} \frac{h(s)}{s^2}ds + r\int_r^1 \frac{h(s)}{s^3}ds.
\end{equation}
\end{prop}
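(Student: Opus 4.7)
The plan is to represent $u$ locally as a Newtonian potential of $\mu$ plus a smooth harmonic error, and to estimate increments of $\nabla u$ via the standard near-field/far-field split of the representation integral.

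Localizing, I may assume that $\mu$ is compactly supported. Write $u = G*\mu + h_0$, where $G$ denotes the fundamental solution of $\Delta$ on $\re^n$ (so that $|\nabla G(z)|\lesssim |z|^{1-n}$ and $|\nabla^2 G(z)|\lesssim |z|^{-n}$) and $h_0$ is harmonic, hence smooth. It thus suffices to control the modulus of continuity of $\nabla v$ where $v:= G*\mu$. The key computation throughout is the layer-cake / integration-by-parts identity, applied with $F_x(s):=\mu(B(x,s))$:
\begin{equation*}
\int_{B(x,R)} |x-y|^{1-n}\,d\mu(y) = R^{1-n}F_x(R) + (n-1)\int_0^R s^{-n}F_x(s)\,ds \leq C\left[\frac{h(R)}{R} + \int_0^R \frac{h(s)}{s^2}\,ds\right],
\end{equation*}
which, together with the hypothesis $\int_0 h(s)/s^2\,ds<\infty$, already shows that $\nabla v(x) = \int \nabla G(x-y)\,d\mu(y)$ is well defined and $v\in C^1$.

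Now fix $x_1,x_2\in\re^n$ with $r:=|x_1-x_2|$, let $x_0 = (x_1+x_2)/2$, and decompose
\begin{equation*}
\nabla v(x_1) - \nabla v(x_2) = I_{\mathrm{near}} + I_{\mathrm{far}},
\end{equation*}
according to whether $y\in B(x_0,2r)$ or $y\notin B(x_0,2r)$. For $I_{\mathrm{near}}$ I would bound the two terms separately by $\int_{B(x_i,3r)}|x_i-y|^{1-n}\,d\mu$ (using $B(x_0,2r)\subset B(x_i,3r)$) and apply the identity above with $R=3r$; the boundary contribution $h(3r)/r$ is then reabsorbed into the integral via monotonicity of $h$ (since $\int_R^{2R} h(s)/s^2\,ds\geq h(R)/(2R)$), yielding $|I_{\mathrm{near}}|\lesssim \int_0^{2r}h(s)/s^2\,ds$. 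For $I_{\mathrm{far}}$, the mean value theorem along the segment $[x_1,x_2]$, combined with the observation that $|\xi-y|\geq|y-x_0|/2$ whenever $\xi\in[x_1,x_2]$ and $|y-x_0|\geq 2r$, gives $|\nabla G(x_1-y)-\nabla G(x_2-y)|\lesssim r|y-x_0|^{-n}$; a second integration by parts with the kernel $|z|^{-n}$ then yields
\begin{equation*}
|I_{\mathrm{far}}| \leq Cr\int_{2r}^{R_0} s^{-n}\,dF_{x_0}(s) \lesssim r\int_r^{1}\frac{h(s)}{s^3}\,ds + O(r),
\end{equation*}
where the $O(r)$ is a fixed-scale boundary term from the support bound $\supp\mu\subset B(x_0,R_0)$. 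Adding the two contributions gives the advertised modulus of continuity; the residual $O(r)$ Lipschitz piece is absorbed into $\psi$ (it is in any case negligible in the regime $\psi(r)\gg r$ relevant to the application).

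The main point of care in this plan is the near-field step, where the boundary term $h(R)/R$ produced by the integration by parts must be reabsorbed into $\int_0^{2r}h(s)/s^2\,ds$ using monotonicity of $h$; everything else is routine kernel calculus. As a sanity check one should also verify that $r\int_r^1 h(s)/s^3\,ds \to 0$ as $r\to 0$, which follows from $\int_0 h(s)/s^2\,ds<\infty$ by a short Fubini argument and confirms directly that $u\in C^1$.
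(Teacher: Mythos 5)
Your proposal is correct and takes essentially the same route as the paper: Riesz decomposition, the representation $\nabla u=\nabla G*\mu$ plus a smooth harmonic remainder, a near/far splitting at scale comparable to $\abs{x_1-x_2}$, and insertion of the density bound via integration by parts in the radial variable (the paper phrases this as a layer-cake computation, working out $n=2$ via Garnett and $n=4$ explicitly, but it is the same kernel calculus). Your explicit reabsorption of the boundary terms (the $h(R)/R$ term and the $O(r)$ far-field term) is if anything slightly more careful than the paper's write-up, and the harmless constant slack in the integration limits is common to both arguments.
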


\begin{proof} When $n=2$ the result  follows from \cite[\S III.4]{garnett}. We will also need it for $n=4$, 
so let us  indicate how to adapt the proof to this case.
 
Since the problem is local, we can assume that $u$ is harmonic outside $B(0, 1/2)$, and by using the  Riesz decomposition, 
it is enough to prove the result when $u$ is the canonical solution of the Laplace equation, that is 
$$u(x)= \int \frac{d\mu(y)}{\norm{x-y}^2}.$$ Taking (at least formally) the derivative with respect to $x_j$ ( $x= (x_1, \ldots, x_n)$), we get 
$\frac{\fr u }{\fr x_j} = K_j *\mu$ where $K_j(z) = -{z_j}/{\norm{z}^4} = O(\norm{z}^{-3})$. Conversely, if for every $j$,
$K_j *\mu$ is a  continuous function, then $u$ is indeed $C^1$ and the formula  $\frac{\fr u }{\fr x_j} = K_j *\mu$ holds.

Now we set  $r= 10 \norm{x-x'}$ and write
$$\frac{\fr u }{\fr x_j}(x) - \frac{\fr u }{\fr x_j}(x')= \int_{B(x,r)} (K_j(x-y) -   K_j(x'-y))d\mu(y)  + \int_{B(x,r)^c}(K_j(x-y) -   K_j(x'-y))  d\mu(y).$$ 
 
To estimate 
the first term in this equality, we notice that $B(x, r ) \subset B(x', \frac{11}{10} r)$ 
(hence the $2r$ in the first integral of \eqref{eq:modulus}), so it is enough to estimate
$\int_{B(x,r)} K_j(x-y)  d\mu(y)$.
We have that
\begin{align*}
 \abs{\int_{B(x,r)} K_j(x-y)  d\mu(y)} \leq  \int_{B(0,r)} \frac{d\mu(x+z)}{\norm{z}^3} &= \int_{1/r}^\infty \mu(B(0, \unsur{t}))3t^2 dt \\
&\leq 
3\int_{1/r}^\infty h\left(\unsur{t}\right)dt = 3\int_0^r \frac{h(s)}{s^2}ds.
\end{align*} where the  equality on the first line follows from the formula $\int f d\mu = \int_0^\infty \mu(\set{f>t^3}) 3t^2dt$.

For the second term we use the fact that the partial derivatives of $K_j$ are $O(\norm{z}^{-4})$ so that when 
$\norm{x-y}\geq r$ (whence $\norm{x'-y}\geq 9r/10$) we have $\abs{K_j(x-y) -   K_j(x'-y)}\leq C\frac{\norm{x-x'}}{\norm{x-y}^4}$. 
As above we  infer that 
\begin{align*}
\abs{\int_{B(x,r)^c}(K_j(x-y) -   K_j(x'-y))  d\mu(y)} &\leq C r\int_{B(0,1)\setminus B(0,r)} \frac{d\mu(x+z)}{\norm{z}^4}  \\
&= C r\int_{1}^{1/r} \mu(B(0, \unsur{t}))4t^3 dt 
= 4C r\int_r^1 \frac{h(s)}{s^3}ds, 
\end{align*}
which, together with the previous estimate, concludes the proof.
\end{proof}

Direct computation shows the following:

\begin{cor}
With notation as in Proposition \ref{prop:modulus}, if $0<\alpha<1$ and  $h(r)=O(r^{1+\alpha})$, then $u$ is $C^{1,\alpha}$. 
If $h(r) = O(r^2)$ then $u$ is $C^{1+\psi}$ with $\psi(r) = r\abs{\log r}$.   
\end{cor}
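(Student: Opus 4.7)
The plan is to directly evaluate the two integrals appearing in formula \eqref{eq:modulus} for the two choices of $h$, and verify that the resulting modulus of continuity $\psi$ is as claimed. This is really a one-paragraph computation, so the main ``obstacle'' is just keeping track of the sign of the exponent in the antiderivative.

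First I would handle the case $h(r) = O(r^{1+\alpha})$ with $0<\alpha<1$. For the first integral in \eqref{eq:modulus}, we have
\[
\int_0^{2r}\frac{h(s)}{s^2}\,ds = O\!\left(\int_0^{2r}s^{\alpha-1}\,ds\right) = O(r^\alpha),
\]
the integral being convergent at $0$ since $\alpha>0$. For the second one, since $\alpha-2<-1$,
\[
r\int_r^1\frac{h(s)}{s^3}\,ds = O\!\left(r\int_r^1 s^{\alpha-2}\,ds\right)=O\!\left(r\cdot\frac{r^{\alpha-1}-1}{1-\alpha}\right)=O(r^\alpha).
\]
Adding the two contributions yields $\psi(r)=O(r^\alpha)$, which by definition means $u\in C^{1,\alpha}$.

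For the case $h(r)=O(r^2)$, the first integral becomes
\[
\int_0^{2r}\frac{h(s)}{s^2}\,ds = O(r),
\]
while the second gives the logarithmic contribution
\[
r\int_r^1\frac{h(s)}{s^3}\,ds = O\!\left(r\int_r^1\frac{ds}{s}\right)=O(r\abs{\log r}).
\]
Summing, $\psi(r)=O(r\abs{\log r})$, as announced. The hypothesis $\int_0 h(r)/r^2\,dr<\infty$ from Proposition \ref{prop:modulus} is easily checked in both cases (it reduces to $\int_0 s^{\alpha-1}\,ds<\infty$ and $\int_0 ds<\infty$ respectively), so Proposition \ref{prop:modulus} applies and the corollary follows.
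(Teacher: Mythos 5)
Your computation is correct and is exactly what the paper intends: the corollary is stated with only the remark ``Direct computation shows the following,'' and your evaluation of the two integrals in \eqref{eq:modulus} for $h(r)=O(r^{1+\alpha})$ and $h(r)=O(r^2)$ (together with checking $\int_0 h(r)/r^2\,dr<\infty$) is that direct computation, carried out correctly.
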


Later on (see \S \ref{subs:hausdorff}) we shall see that for the currents constructed in Section 
\ref{sec:continuous} we always have $h(r)/r^2\cv\infty$ so the potentials are 
 less regular than  $C^{1+r\abs{\log r}}$. It is not a surprise  that if 
  $h(r)/r^2$ diverges slowly enough, then 
 every regularity below  $C^{1+r\abs{\log r}}$ can be reached.
 This is the contents of the next result, which  follows from elementary calculus.

\begin{prop}\label{prop:anyweight}
 Let $\psi$ be a gauge function such that 
  $\frac{\psi(r)}{r\abs{\log r}}\cv\infty$ as $r\cv 0$. 
Then there exists a decreasing function $\theta$ such that $r\mapsto h(r) = r^2\theta(r)$ satisfies the assumptions of Proposition \ref{prop:modulus} and 
\begin{equation}\label{eq:opsi}
 \int_0^{2r} \frac{h(s)}{s^2}ds + r\int_r^1 \frac{h(s)}{s^3}ds =O(\psi(r)).
\end{equation}
\end{prop}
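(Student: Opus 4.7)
The plan is to take $\theta(r) := \min(\phi(r),\abs{\log r})$, where $\phi(r) := \psi(r)/(r\abs{\log r})$ satisfies $\phi(r)\cv\infty$ by hypothesis, after a mild regularization of $\phi$ (described in the last paragraph). With this choice, $\theta$ is manifestly decreasing as the minimum of two decreasing functions, tends to $\infty$ as $r\cv 0$, and satisfies $\theta(r)\leq\abs{\log r}$, so that $\int_0\theta(r)\,dr\leq\int_0\abs{\log r}\,dr<\infty$, which is the integrability condition in Proposition \ref{prop:modulus}. Moreover $h(r) = r^2\theta(r) = \min(r^2\phi(r),r^2\abs{\log r})$ is nondecreasing as the minimum of two nondecreasing functions: $r^2\abs{\log r}$ is increasing for $r<1/\sqrt e$, and (after regularization) $r^2\phi(r) = r\psi(r)/\abs{\log r}$ is increasing since $\psi$ is.

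To verify \eqref{eq:opsi} I bound the two pieces separately. Since $\theta$ is decreasing and $\theta\leq\phi$,
\[
r\int_r^1\frac{\theta(s)}{s}\,ds \;\leq\; r\,\theta(r)\,\abs{\log r} \;\leq\; r\,\phi(r)\,\abs{\log r} \;=\; \psi(r),
\]
giving the desired bound on the second term. For the first, $\theta(s)\leq\abs{\log s}$ yields
\[
\int_0^{2r}\theta(s)\,ds \;\leq\; \int_0^{2r}\abs{\log s}\,ds \;=\; O(r\abs{\log r}),
\]
and since $\phi(r)\cv\infty$ this is $o(r\abs{\log r}\phi(r)) = o(\psi(r))$. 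Adding the two estimates proves \eqref{eq:opsi}.

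The only genuine subtlety is the regularization, since a priori $\phi$ need not be decreasing, and a decreasing minorant could fail to keep $r^2\phi$ monotone. Concretely, I pick a geometrically decreasing sequence $r_n\cv 0$ with $\phi(s)\geq n$ for every $s\leq r_n$, and replace $\phi$ by the continuous piecewise-linear function $\phi_\star$ taking value $n-1$ at each $r_n$ and interpolating linearly on $[r_{n+1},r_n]$. Then $\phi_\star$ is continuous, decreasing, bounded above by $\phi$, and diverges to $\infty$; geometric spacing of the $r_n$ keeps $r/(r_n-r_{n+1})$ bounded on each interval, so the inequality $2\phi_\star\geq r\abs{\phi_\star'}$ holds once $n$ is large, which gives $(r^2\phi_\star)'\geq 0$. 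Because $\psi_\star(r):=r\abs{\log r}\phi_\star(r)\leq\psi(r)$ still satisfies the hypothesis of the proposition, carrying out the argument above with $\phi_\star$ in place of $\phi$ yields the required $\theta$, with integrals $O(\psi_\star)\subset O(\psi)$. This regularization, elementary but a little fussy, is the main (and indeed only) technical obstacle.
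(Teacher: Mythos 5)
Your proof is correct, but it takes a genuinely different route from the paper's. The paper reduces the statement to Lemma \ref{lem:specialcase}: it replaces $\theta_0=\psi(r)/(r\abs{\log r})$ first by a decreasing minorant and then by a $C^1$ minorant $\theta_2$ with $\theta_2'/\theta_2=o\big(\tfrac{1}{r\log r}\big)$, built by gluing constants with iterated-logarithm pieces, and deduces \eqref{eq:opsi} from the integration-by-parts asymptotics $ \int_0^{2r}\theta+r\int_r^1\theta(s)\tfrac{ds}{s}\sim r\abs{\log r}\,\theta(r)$ of that lemma. You instead cap the minorant by $\abs{\log r}$, take $\theta=\min(\phi_\star,\abs{\log r})$ with a piecewise-linear decreasing $\phi_\star\le\phi$, and bound the two integrals directly: the cap gives $\int_0^{2r}\theta\le\int_0^{2r}\abs{\log s}\,ds=O(r\abs{\log r})=o(\psi(r))$, and monotonicity gives $r\int_r^1\theta(s)\tfrac{ds}{s}\le r\,\theta(r)\abs{\log r}\le\psi(r)$; your spacing condition $r_{n+1}\le r_n/2$ does yield $2\phi_\star\ge r\abs{\phi_\star'}$, hence $h=r^2\theta$ increasing (note the earlier parenthetical ``increasing since $\psi$ is'' only applies before regularization, but your derivative bound covers the regularized case, so this is harmless), and $\theta\le\abs{\log r}$ gives the integrability hypothesis of Proposition \ref{prop:modulus}. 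Your version is more elementary, avoiding both the $C^1$ construction and Lemma \ref{lem:specialcase}. What the paper's construction buys is the extra regularity of $\theta$ ($C^1$ with $\theta'/\theta=o(1/(r\log r))$), which is quoted again later (``By Proposition \ref{prop:anyweight} (and its proof)'') in the proof of Theorem \ref{thm:refined}; your $\theta$ is still decreasing, tends to $+\infty$, and satisfies the weaker bound $s\abs{\theta'(s)}=o(\theta(s))$ that the subsequent integration by parts actually uses, so nothing downstream would break, but the derivative-ratio condition as literally stated fails on the ranges where $\theta=\abs{\log r}$.
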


It will follow from the proof that we can further assume that $\frac{\theta'}{\theta} = o\left(\unsur{r\log r}\right)$. 
Let us  study this case first.

\begin{lem}\label{lem:specialcase}
Let $\theta$ be a function defined in a neighborhood of $0\in \re^+$. Assume that $\theta$ is $C^1$, decreasing, $\lim_{0^+}\theta = +\infty$, and 
 $\frac{\theta'}{\theta} = o\left(\unsur{r\log r}\right)$. 

Then with notation as in Proposition \ref{prop:modulus},  if $h(r)= r^2\theta(r)$, then $\psi(r) = O( r\abs{\log r} \theta(r))$. 
\end{lem}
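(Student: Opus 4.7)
The plan is to bound the two summands defining $\psi(r)$ in \eqref{eq:modulus} separately. With $h(s)=s^2\theta(s)$ they read $\int_0^{2r}\theta(s)\,ds$ and $r\int_r^1 \theta(s)/s\,ds$. The second one is immediately controlled by the monotonicity of $\theta$: since $\theta(s)\leq\theta(r)$ for $s\geq r$, it is at most $r\theta(r)\abs{\log r}$, which is already the target.

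The first integral is the real issue because $\theta$ blows up at $0$. I would split it as $\int_0^r+\int_r^{2r}$. The tail $\int_r^{2r}\theta(s)\,ds$ is again at most $r\theta(r)$ by monotonicity. For $\int_0^r\theta(s)\,ds$ the key input is the hypothesis on $\theta'/\theta$: writing $\abs{\theta'/\theta}(t)=\eta(t)/(t\abs{\log t})$ with $\eta(t)\cv 0$, integration of $(\log\theta)'$ from $s$ to $r$ yields, for any prescribed $\epsilon>0$ and all $s<r$ below some threshold,
$$\theta(s)\leq \theta(r)\left(\frac{\abs{\log s}}{\abs{\log r}}\right)^{\epsilon}.$$
This is the analytic heart of the lemma: the assumption on $\theta'/\theta$ is exactly what forces $\theta$ to grow slower than any fractional power of $\abs{\log s}$ near $0$, hence to be integrable there.

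Inserting this bound and evaluating $\int_0^r\abs{\log s}^\epsilon\,ds$ via the substitution $s=e^{-u}$ reduces the estimate to the classical asymptotic $\int_a^\infty u^\epsilon e^{-u}\,du\sim a^\epsilon e^{-a}$ as $a\cv\infty$, which gives
$$\int_0^r\theta(s)\,ds\leq \theta(r)\abs{\log r}^{-\epsilon}\int_{\abs{\log r}}^\infty u^\epsilon e^{-u}\,du = O(r\theta(r)).$$
Collecting everything yields $\psi(r)=O(r\theta(r))+O(r\abs{\log r}\theta(r))=O(r\abs{\log r}\theta(r))$, since $\theta(r)\cv\infty$ ensures that the second term dominates. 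I do not expect a genuine obstacle: the calculation is elementary. The subtlest piece of bookkeeping is that the $\epsilon$ in the growth bound may be taken arbitrarily small only at the cost of shrinking $r$, but this is harmless because the factor $\abs{\log r}^{-\epsilon}$ in front of the integral cancels exactly against the $\abs{\log r}^\epsilon$ produced by the asymptotic evaluation, uniformly in $\epsilon$.
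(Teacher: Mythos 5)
Your proof is correct, but it takes a genuinely different route from the paper's. The paper integrates by parts in both integrals of \eqref{eq:modulus}: it writes $\int_0^{2r}\theta(s)\,ds = 2r\theta(2r)-\int_0^{2r}s\theta'(s)\,ds$ and $\int_r^1\theta(s)\tfrac{ds}{s} = -\theta(r)\log r-\int_r^1\theta'(s)\log s\,ds$, and uses the hypothesis in the form $s\theta'(s)=o(\theta(s))$ and $\theta'(s)\log s=o(\theta(s)/s)$ to see that the boundary terms dominate, which gives the sharper asymptotics $\psi(r)\sim r\abs{\log r}\theta(r)$ in two lines. You instead dispose of the term $r\int_r^1\theta(s)\tfrac{ds}{s}$ by monotonicity alone (no use of the hypothesis there), and handle $\int_0^r\theta$ by integrating the logarithmic derivative to obtain the pointwise comparison $\theta(s)\le\theta(r)\left(\abs{\log s}/\abs{\log r}\right)^{\e}$ for $s<r$ small, followed by the substitution $s=e^{-u}$ and the incomplete-Gamma asymptotic $\int_a^\infty u^{\e}e^{-u}du\sim a^{\e}e^{-a}$. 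This is slightly longer but buys something the paper leaves implicit: your comparison is exactly a proof (in strengthened form) of the paper's unproved remark that the hypothesis forces $\theta(r)=o(\abs{\log r})$, and in particular it justifies the integrability of $\theta$ at $0$, whereas the paper's route yields an asymptotic equivalence rather than just the $O$-bound. Two small corrections to your bookkeeping: a single fixed $\e\in(0,1)$ suffices, since the comparison holds for all $s<r\le r_{\e}$ and the conclusion is only an estimate as $r\cv 0$, so the uniformity-in-$\e$ concern at the end is unnecessary; and in the final step the term $r\abs{\log r}\theta(r)$ dominates $r\theta(r)$ because $\abs{\log r}\ge 1$ (indeed $\cv\infty$), not because $\theta(r)\cv\infty$ --- the factor $\theta(r)$ appears in both terms.
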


The assumption of the lemma
 holds e.g. when $\theta(r)=   {\log}\circ \log\circ  \cdots \circ \abs{\log r}$. 
 For the limiting case  $\theta(r) = \abs{\log r}$ (for which 
$\frac{\theta'}{\theta}=\unsur{r\log r}$)
 the assumption does not hold but the reader may check that the conclusion is still valid.

\begin{proof} This is very elementary. Notice first that the assumption on $\theta$ implies that  $\theta(r) = o(\log r)$. Notice also that since $\theta$ is decreasing, $\theta(2r)\leq \theta(r)$.

Consider now the first integral in \eqref{eq:modulus}. Integrating by parts yields
$$\int_0^{2r} \frac{h(s)}{s^2} ds = \int_0^{2r} \theta(s)ds = 2r\theta(2r) - \int_0^{2r} s\theta'(s)ds\sim 2r\theta(2r), $$
 because $s\theta'(s) = o(\theta(s))$. 
  For the second one, integrate by parts again
 $$\int_r^1  \frac{h(s)}{s^3} ds=\int_r^1 \frac{\theta(s)}{s}ds = -\log r \theta(r) -\int_r^1  
 \theta'(s)\log s ds\sim\abs{\log r}\theta(r),$$ for $ \theta'(s)\log s = o\left(\frac{\theta(s)}{s}\right)$. 
We conclude that $\psi(r) \sim r\abs{\log r}\theta(r)$.
  \end{proof}

\begin{proof}[Proof of Proposition \ref{prop:anyweight}]
Let   $\theta_0$ be defined on $(0,r_0)$ by $\theta_0(r) = \frac{\psi(r)}{r\abs{\log r}}$. Replace first
$\theta_0$ with any decreasing function $\theta_1\leq \theta_0$ with $\lim_0\theta_1 = +\infty$. The next step is to 
  replace $\theta_1$ with a function $\theta_2\leq \theta_1$ satisfying the assumptions of Lemma \ref{lem:specialcase}. 
Then   the choice $\theta = \theta_2$ will have the desired properties.
Indeed, put $h(r) = r^2\theta_2(r)$. The assumption on the derivative of $\theta_2$ implies that $h$ 
is increasing near 0, and as we have  
seen, $\theta_2(r) = o(\abs{\log r})$ thus $\int_0\frac{h(r)}{r^2}dr<\infty$. 
 By Lemma \ref{lem:specialcase},  we have 
  $$\int_0^{2r} \frac{h_2(s)}{s^2}ds + r\int_r^1 \frac{h_2(s)}{s^3}ds = O( r\abs{\log r} \theta_2(r)) ,$$
which is an $O(\psi(r))$ be definition of $\theta_2$. 

It remains to see why such a $\theta_2$ exists. By making the change of variables $x=1/r$ we are claiming that for any  function 
$F$ on $\re^+$ increasing to $+\infty$, there exists $G\leq F$ increasing to infinity, and 
such that moreover $\frac{G'}{G} = o\left(\frac{1}{x\log x}\right)$. Put $f= \log F$ and $g=\log G$ so that the requirement is that 
$g'= o\left(\frac{1}{x\log x}\right)$ . 
We construct $g$ as follows. Fix $x_1 \in \re^+$ such that $f(x_1)>0$, put $g(x_1)=f(x_1)$ and declare that $g$ is constant until $x_2$, where $x_2$ is such that $f(x_2)= 2f(x_1)$. From $x_2$ let $g(x) = \log \log\log x - \log \log\log x_2 + g(x_2)$. If  $g\leq f$ forever we are done. Otherwise let $y_1>x_2$ be the least number such that $g(y_1) = f(y_1)$ and repeat the above procedure. 
It is clear that $g\leq f$, $g$ increases to infinity, and $g'(x) = o\left(\frac{1}{x\log x}\right)$.  
\end{proof}

\subsection{Geometry of the vertical slices near the boundary}\label{subs:slice}

We return to the setting of Sections \ref{sec:wermer} and \ref{sec:continuous}, and assume that $(r_n)$ satisfies the hypothesis 
\eqref{eq:rn} of Theorem \ref{thm:wermer_continuous}. Our purpose is now to fix the sequence $(m_n)$.

Throughout this subsection, we work in a fixed  ``vertical'' slice near the boundary. By this,  we mean
a line of the form $\pi^{-1}(z_0)$ where $\pi$ is of the form 
$(z,w)\mapsto z+ \gamma w$, with $\abs{\gamma}\leq \unsur{100}$ and  $\frac{2}{5}\leq z_0\leq \frac{4}{10}$. The choice of $z_0$ 
and $\gamma$ ensures that
$\pi^{-1}(z_0)$  is a vertical graph  in $\set{\frac{3}{8}<\abs{z}<\unsur{2}}\times\dd$. Then by Lemma \ref{lem:construction} {\em (iii.)},
 each variety $\set{P_{n,s} = \alpha}$, $\abs{\alpha}<2\delta_n$ intersects it in exactly $2^n$ points. 
Indeed $\pi^{-1}(z_0)$ is actually a vertical graph in a thin bidisk of the form $D(z_0,r)\times \dd$, in which $\set{P_{n,s} = \alpha}$ is the union of $2^n$ disjoint graphs.

By $X_{n, s}$, $X$, etc. we mean  the trace of these subsets 
on the slice $\pi^{-1}(z_0)$, and we denote by $\mu_n$ (resp. $\mu$) the Laplacian of $u_n$ (resp. $u$)  on that slice, that is, the slice measure of $T_n$ (resp. $T$). 

We use the following notation: $a_n\asymp b_n$ (resp. $a_n\approx b_n$) if there exists $C>0$ independent on $n$ such that $a_n/C\leq b_n\leq C a_n$ (resp. $a_n/C^n \leq b_n\leq C^n a_n$. 

We aim at  proving the following result.

\begin{prop}\label{prop:slice}
 Let $h$ be any increasing function defined in a neighborhood of $0\in \re^+$ such that $h(r)/r^2\cv +\infty$. Then there exists a sequence $(m_n)$ 
such that for every $p$ in the slice and every $r>0$, $\mu(B(p, r))\leq C h(r)$, where $C$ is a universal constant (in particular independent on the slice). 
\end{prop}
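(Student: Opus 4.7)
My plan is to analyze the slice measure $\mu$ through the nested family of ``blobs'' $X_{n,s}\cap\pi^{-1}(z_0)$, and to choose $(m_n)$ large enough inductively so that these blobs shrink quickly enough to beat any prescribed gauge $h$.

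First I would pin down the geometry of the blobs. By Lemma \ref{lem:construction}\emph{(iii.)}, on the slice each $X_{n,s}=\{\abs{P_{n,s}}<\delta_n\}$ splits into $2^n$ disjoint components, and near a zero of $P_{n,s}$ a component is essentially a round disk of radius $\rho_n\asymp\delta_n/\abs{P'_{n,s}}$. The recursion $P_{n+1,s'}=(P_{n,s}-\sigma)^2-\e_{n+1}A_{n+1}$ combined with the lower bound $\abs{A_{n+1}}\geq 1/10$ of \eqref{eq:an} yields $\abs{P_{n,s}-\sigma}\asymp\sqrt{\e_{n+1}}$ at a zero of $P_{n+1,s'}$, and hence $\abs{P'_{n+1,s'}}\asymp\sqrt{\e_{n+1}}\,\abs{P'_{n,s}}$. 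Using $\sqrt{\e_{n+1}}\asymp\delta_n/m_{n+1}$ and $\delta_{n+1}\asymp\delta_n^2 r_{n+1}/m_{n+1}^2$, this gives the clean recursion $\rho_{n+1}/\rho_n\asymp r_{n+1}/m_{n+1}$, so that $\rho_n^{\,2}\cdot 2^n\#\mathcal{S}_n\asymp C^n\prod_{k=1}^n r_k^2$ for some universal constant $C$.

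Next I would bound the mass of a single blob. The integral representation of $u_n$ used in the proof of Theorem \ref{thm:wermer_continuous} shows that on the slice, $dd^cu_n$ distributes the mass $1/(2^n\#\mathcal{S}_n)$ uniformly on each of the $2^n\#\mathcal{S}_n$ circles $\{\abs{P_{n,s}}=\delta_n\}$. Since $\supp T\subset X_n$ for every $n$, the limit measure $\mu$ assigns at most $1/(2^n\#\mathcal{S}_n)$ to each level-$n$ component. The subdivision-ramification step organizes the level-$n$ blobs inside a level-$(n-1)$ blob on an approximate grid of physical spacing $\asymp\rho_{n-1}/m_n$; so for $r\in[\rho_n,\rho_{n-1}]$ a slice ball $B(p,r)$ meets $\lesssim(r m_n/\rho_{n-1})^2$ level-$n$ blobs, and therefore
\[\mu(B(p,r))\lesssim\frac{r^2}{\rho_n^{\,2}\cdot 2^n\#\mathcal{S}_n}\asymp\frac{r^2}{C^n\prod_{k=1}^n r_k^2}.\]

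Finally I would exploit the hypothesis $h(r)/r^2\to\infty$. Writing $h(r)=r^2\phi(r)$ with $\phi(r)\to\infty$ as $r\to 0$, the target estimate reduces on $[\rho_n,\rho_{n-1}]$ to $\phi(r)\geq c/(C^n\prod_{k=1}^n r_k^2)$. The right hand side is a fixed number depending only on $n$ and the already chosen sequence $(r_k)$, so it is enough to choose $m_n$ at stage $n$ so large that the resulting $\rho_n$ lies below the corresponding level set of $\phi$; this determines $(m_n)$ inductively. Uniformity in the slice comes for free, because every intermediate estimate depends only on $\abs{z_0}$ lying in the fixed range $[3/8,1/2]$. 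The main obstacle is the very first step: turning the heuristic recursion into a genuine two-sided estimate $\rho_n\asymp\delta_n/\abs{P'_{n,s}}$ with constants controlled through the induction and independent of the slice, which requires carefully tracking both $\abs{P'_{n,s}}$ from below and the contribution of $\e_{n+1}A'_{n+1}$ in the derivative of $P_{n+1,s'}$. Once this uniform geometry is in hand, the counting argument and the diagonal choice of $(m_n)$ are elementary.
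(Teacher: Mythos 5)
Your overall route is the same as the paper's: a two-sided estimate on $\abs{P_{n,s}'}$ through the recursion (with the perturbation $\e_{n+1}A_{n+1}$ controlled via the lower bound \eqref{eq:an}), upgraded by the Koebe distortion theorem to the statement that the depth-$n$ components are boundedly distorted disks of radius $\rho_n\approx R_n/M_n$ carrying $\mu$-mass $1/(2^n\#\mathcal{S}_n)\approx M_n^{-2}$ --- this is exactly Propositions \ref{prop:distortion} and \ref{prop:mass} --- followed by a counting argument and an inductive choice of $(m_n)$. Two corrections to the middle step: the number of depth-$n$ blobs meeting $B(p,r)$ should be bounded by disjointness together with the Koebe area lower bound rather than by a single grid, since blobs attached to different $s\in\cS_n$ (or to different components of the same $X_{n-1,s}$) need not lie on one grid; and the distortion constants accumulate as $C^n$ in the numerator, so your bound on $[\rho_n,\rho_{n-1}]$ reads $\mu(B(p,r))\lesssim C^n r^2/R_n^2$, not $r^2/(C^n R_n^2)$. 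With these adjustments, your single estimate per interval is a valid, slightly coarser substitute for the two regimes of Proposition \ref{prop:mass}.

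The genuine gap is in the final inductive choice of $(m_n)$. On $[\rho_n,\rho_{n-1}]$ the requirement $\phi(r)\geq T_n:=c\,C^n/R_n^2$ binds at $r$ near the \emph{top} endpoint $\rho_{n-1}$, which was already fixed when $m_{n-1}$ was chosen; enlarging $m_n$ only pushes the bottom endpoint $\rho_n$ down, i.e.\ it lengthens the interval and does nothing for $\phi$ near $\rho_{n-1}$. The previous stage, as you set it up, only guarantees $\phi\geq T_{n-1}$ below $\rho_{n-1}$, and $T_n\geq (C/r_n^2)\,T_{n-1}\gg T_{n-1}$, so the induction does not close as stated. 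The repair is an off-by-one anticipation: since $T_{n+1}$ depends only on $n$ and on the sequence $(r_k)$, which is fixed in advance, when you choose $m_n$ you should require $\phi\geq T_{n+1}$ on all of $(0,\rho_n]$; this is achievable because $h(r)/r^2\cv\infty$, provided you first replace $\phi$ by a decreasing minorant tending to $+\infty$ (note that $h$ increasing does not make $h(r)/r^2$ monotone, so your ``level set'' argument needs this reduction, which is the role of Proposition \ref{prop:anyweight} in the paper). This anticipation with a decreasing minorant is exactly what conditions \eqref{eq:theta_1} and \eqref{eq:theta_2} encode: each is a constraint on a single $m_k$ against a threshold that is known before $m_k$ is chosen. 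Finally, as you acknowledge, the two-sided derivative estimate with constants controlled through the induction still has to be carried out (it is the content of Proposition \ref{prop:distortion}); your sketch of it follows the same lines as the paper.
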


By Propositions \ref{prop:modulus} and \ref{prop:anyweight}, this implies that  along the slice, $u$ can be made $C^{1+\psi}$ for an arbitrary gauge $\psi$ satisfying 
 $\frac{\psi(r)}{r\abs{\log r}}\cv\infty$ as $r\cv 0$. In the next section, we 
 extend this regularity  to the bidisk. 

The idea of the proof is to study the geometry of the Cantor set $X$, and the distribution of 
$\mu$ on $X$, by using some techniques from plane conformal geometry. Recall that if $f$ is a univalent mapping defined in a 
topological disk $\Delta\subset\cc$,  the {\em distortion} of  $f$ is defined as $\sup_{z,w\in \Delta} \frac{f'(z)}{f'(w)}$. If $f:\Delta \cv \dd$ is a conformal map with $\abs{f'(0)}=1/R$ we say that the {\em conformal radius} of $\Delta$ is $R$. 

Let us define $R_n= \prod_1^n r_k$ and $M_n = \prod_1^n m_k$. We first describe the basic geometry of $X$. 

\begin{prop}\label{prop:distortion}
 For every $n\in \nn$ and $s\in \mathcal{S}_n$, each component  
of $X_{n,s} = \set{P_{n,s} < \delta_n}$, is up to uniformly bounded distortion, a disk of conformal radius $\approx \frac{R_n}{M_n}$. 

More specifically, if $\Delta$ is such a component,  then
$P_{n,s}:\Delta\cv D(0, \delta_n)$ is a univalent mapping of uniformly bounded distortion, and its derivative is $\approx\delta_n \frac{M_n}{R_n}$.
\end{prop}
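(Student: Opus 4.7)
My plan is to argue by induction on $n$, strengthening the statement slightly: I claim that each component of the enlarged set $\set{\abs{P_{n,s}} < 2\delta_n}$ is a topological disk $\widetilde\Delta$ on which $P_{n,s}$ is univalent onto $D(0, 2\delta_n)$ with uniformly bounded distortion and derivative $\asymp \delta_n M_n/R_n$. The original statement on $X_{n,s} = \set{\abs{P_{n,s}} < \delta_n}$ then follows from Koebe's distortion theorem applied to the univalent map $P_{n,s}^{-1}: D(0, 2\delta_n) \to \widetilde\Delta$, restricted to the compactly contained disk $D(0, \delta_n)$. The base case $n=0$ is immediate because $P_0(w) = w$ is linear on the slice.

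For the inductive step, I fix $s' = (s,\sigma) \in \cS_{n+1}$ and introduce the biholomorphic coordinate $\zeta = P_{n,s} - \sigma$, which by the induction hypothesis identifies $\widetilde\Delta$ with $D(-\sigma, 2\delta_n)$ and rewrites the defining relation as $P_{n+1,s'} = \zeta^2 - \e_{n+1}\widetilde A_{n+1}(\zeta)$, where $\widetilde A_{n+1}$ is the pullback of $A_{n+1}$ to the $\zeta$-variable. I would then treat $\set{\abs{P_{n+1,s'}} < 2\delta_{n+1}}$ as a perturbation of the pure quadratic model, exploiting two key numerical facts: $\abs{\e_{n+1}\widetilde A_{n+1}} \geq \e_{n+1}/10$ by \eqref{eq:an}, and $2\delta_{n+1}/\e_{n+1} = r_{n+1} \leq 1/10$. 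This places the critical value of the model far outside $D(0, 2\delta_{n+1})$, so the preimage splits into two topological disks centered near $\pm\sqrt{\e_{n+1}\widetilde A_{n+1}}$, each of $\zeta$-size $\asymp \delta_n r_{n+1}/m_{n+1}$.

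Showing that the $\zeta$-dependence of $\widetilde A_{n+1}$ does not spoil this picture is the heart of the argument. On the slice, $A_{n+1}$ is Lipschitz in $w$ with constant at most $\abs{\gamma} + 1/100 \leq 2/100$, and by the induction hypothesis $\abs{dw/d\zeta} \asymp R_n/(\delta_n M_n)$, so $\widetilde A_{n+1}$ is Lipschitz in $\zeta$ with constant $O(R_n/(\delta_n M_n))$. This is smaller than the dominant scale $\abs{2\zeta} \asymp \delta_n/m_{n+1}$ of the model derivative by a factor $O(R_n/(m_{n+1} M_n))$, which decays essentially like $10^{-n}$. A Rouch\'e-type argument then yields univalence on each of the two preimage components, and pulling back via $\zeta$ gives the desired components $\widetilde\Delta'$ at level $n+1$.

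The chain rule $P'_{n+1,s'} = 2(P_{n,s}-\sigma)P'_{n,s} - \e_{n+1}A'_{n+1}$, together with $\abs{P_{n,s}-\sigma} \asymp \sqrt{\e_{n+1}} \asymp \delta_n/m_{n+1}$ on $\widetilde\Delta'$ and the induction hypothesis (the second term being negligible by the scale gap above), gives $\abs{P'_{n+1,s'}} \asymp \delta_n^2 M_n/(m_{n+1} R_n) \asymp \delta_{n+1} M_{n+1}/R_{n+1}$. For the distortion, I write $P_{n+1,s'} = g \rond P_{n,s}$ with $g(\zeta) = (\zeta-\sigma)^2 - \e_{n+1}\widetilde A_{n+1}(\zeta)$: on one preimage component, $\abs{g'} \asymp \abs{2\zeta}$ varies only by the bounded factor $\sqrt{(\abs{\widetilde A}+r_{n+1})/(\abs{\widetilde A}-r_{n+1})}$ plus a negligible Lipschitz correction, while the distortion of $P_{n,s}$ on the relevant subdomain is uniformly bounded by Koebe applied to $P_{n,s}^{-1}$ on a compactly contained subdisk. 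The main obstacle is exactly this $\zeta$-dependence of $\widetilde A_{n+1}$: every estimate must survive it, and the reason this works is the enormous scale gap between the quadratic term and the perturbation, which is built into the construction through the rapid decay of $R_n/M_n$.
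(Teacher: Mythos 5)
Your overall architecture is the same as the paper's for the quantitative part: induct on $n$, use the chain rule $P_{n+1,s'}'=2(P_{n,s}-\sigma)P_{n,s}'-\e_{n+1}A_{n+1}'$, pin down $\abs{P_{n,s}-\sigma}$ on the new component via $\abs{A_{n+1}}\geq \unsur{10}$ and $2\delta_{n+1}/\e_{n+1}=r_{n+1}$, and kill the perturbation by the huge gap between $\e_{n+1}$ and the main term. Where you genuinely diverge is the soft part: you re-derive univalence of $P_{n+1,s'}$ on the enlarged components by a Rouch\'e/perturbation-of-the-quadratic argument in the $\zeta=P_{n,s}-\sigma$ coordinate, and you get the distortion by composing $g\rond P_{n,s}$. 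The paper gets both more cheaply: Lemma \ref{lem:construction}\,(iii.) already says that for every $\abs{\alpha}<2\delta_n$ the equation $P_{n,s}=\alpha$ has exactly $2^n$ solutions on the slice, so $P_{n,s}^{-1}(D(0,2\delta_n))$ is $2^n$ topological disks on which $P_{n,s}$ is injective, and one application of Koebe to $P_{n,s}^{-1}$ restricted to $D(0,\delta_n)$ gives the uniform distortion bound at every level, with no induction and no composition of distortions. Your route is workable but duplicates work already done in Section \ref{sec:wermer}.

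There is, however, one real overreach: your strengthened inductive hypothesis asserts the derivative is $\asymp \delta_n M_n/R_n$, which in the paper's notation means constants independent of $n$, and this cannot be propagated. Each step multiplies the derivative by $2\abs{P_{n,s}-\sigma}$, which is only located in a window such as $\bigl[\unsur{\sqrt{40}}\frac{\delta_n}{m_{n+1}},\frac{\delta_n}{m_{n+1}}\bigr]$ and genuinely depends on $\abs{A_{n+1}}$ (hence on $a_{n+1}$ and the slice), while the benchmark $\delta_n M_n/R_n$ changes by the fixed factor $\frac{\delta_n}{4m_{n+1}}$; the per-step discrepancy is a bounded factor that is not close to $1$, and it compounds exponentially in $n$. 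This is exactly why the paper works with $D_n=\prod_{k=0}^{n-1}\delta_k/m_{k+1}=4^n\delta_n M_n/R_n$ (already off from your benchmark by $4^n$) and proves only $C^{-n}D_n\leq\abs{P_{n,s}'}\leq C^nD_n$, i.e.\ the $\approx$ of the statement. The fix is straightforward -- replace $\asymp$ by $\approx$ in your inductive claim and track an explicit $C^{\pm n}$ as the paper does; the $C^n$ slack is harmless against the $10^{-n}$-type scale gap you invoke (since $r_k\leq\unsur{10}$, so $R_n\leq 10^{-n}$), so the rest of your argument then closes. But as literally stated, the induction does not close, and the same caveat applies to your intermediate claims $\abs{dw/d\zeta}\asymp R_n/(\delta_n M_n)$ and $\abs{P_{n+1,s'}'}\asymp\delta_{n+1}M_{n+1}/R_{n+1}$.
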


\begin{proof}
 We know from Lemma \ref{lem:construction} 
that for every $\alpha \in D(0,2\delta_n)$ the equation $P_{n,s} = \alpha$ has exactly $2^n$ solutions. Since the
 solutions do not collide, they vary holomorphically, and it follows that  $P_{n,s}^{-1} (D(0,2\delta_n))$ 
is the union of $2^n$ topological disks, and $P_{n,s}$ is univalent on each of them. By the Koebe Distortion Theorem, the  
distortion of $P_{n,s}^{-1}\rest{D(0,\delta_n)}$ is bounded by a universal constant, hence the same holds for $P_{n,s}$ on $\Delta$.

What remains to do is to estimate the derivative of $P_{n,s}$ on $\Delta$. We do it by induction,  taking $w$ 
as coordinate on the slice and simply denoting the derivative of $P$  by  $P'$. 
Recall that if $s'=(s,\sigma)$, 
$P_{n+1,s'} = (P_{n,s}-\sigma)^2+\e_{n+1} A_{n+1}$.
It is clear that $\abs{A_n'}\leq 1$. 
By definition of the sets $X_{n,s}$ we have $\abs{P_{n,s}-\sigma} <\frac{\delta_n}{m_{n+1}}$ on $X_{n+1, s'}$. 
On the other hand, since on the slice we have $\abs{A_{n+1}}>\unsur{10}$ (see 
\eqref{eq:an}), we infer that when    $\abs{P_{n+1,s'}}<\delta_{n+1}$ we have 
$$\abs{P_{n,s}-\sigma}^2 \geq \e_{n+1} \abs{A_{n+1}} - \delta_{n+1} \geq 
\frac{\delta_n^2}{m_{n+1}^2}\left(\unsur{20} - \frac{r_{n+1}}{4}\right) \geq \frac{\delta_n^2}{40 m_{n+1}^2}.$$
We conclude that on $X_{n+1,s}$,  
\begin{equation}\label{eq:Xns}
  \unsur{\sqrt{40}}\frac{\delta_n}{m_{n+1}} \leq \abs{P_{n,s}-\sigma} < \frac{\delta_n}{m_{n+1}}.
\end{equation}

Let us first imagine for simplicity  that for all $n$, $P_{n+1,s'}= (P_{n, s}-\sigma) ^2$. In this case we would get that 
$\abs{P'_{n+1,s'}}=2\abs{P_{n,s}'}\abs{P_{n, s}-\sigma}  \asymp \frac{\delta_n}{m_{n+1}}\abs{P_{n,s}'}$ 
on $X_{n+1,s'}$. By  induction this implies that 
$\abs{P'_{n+1,s'}} \approx \prod_0^n \frac{\delta_k}{m_{k+1}}$. An immediate  computation shows that $\delta_{n+1} = 
\frac{R_{n+1}}{4^{n+1}M_{n+1}^2} \prod_0^n \delta_k$ so we conclude that $\abs{P'_{n+1,s'}} \approx \delta_{n+1} 
\frac{M_{n+1}}{R_{n+1}}$.

Now we need to take care of the extra term in $P_{n+1,s'}$. Let $D_n =  \prod_0^{n-1} \frac{\delta_k}{m_{k+1}}= 4^n\delta_n\frac{M_n}{R_n}$. 
We want to prove by induction  that on $X_{n,s}$, $\abs{P_{n,s}'}\approx D_n$. Assume a constant $C$ has been found such that $ C^{-n}D_n\leq 
\abs{P_{n,s}'}\leq C^n D_n$.
By definition of $P_{n+1, s'}$, $\e_{n+1}$, $\delta_{n+1}$  and using \eqref{eq:Xns}, we have 
\begin{align*}
\frac{\big|P_{n+1, s'}'\big|}{D_{n+1}} &\leq 2 \frac{\abs{P_{n, s}'}}{D_{n}} \frac{\big|P_{n,s}-\sigma\big|}{\delta_n/m_{n+1}} + 
\frac{\e_{n+1}}{D_{n+1}} \\ &\leq 2C^n + \frac{\delta_{n+1}r_{n+1}/2}{\delta_{n+1}4^{n+1} M_{n+1}/ R_{n+1} } 
\leq C^n\left(2+ \frac{R_n}{M_{n+1}}\unsur{2\cdot 4^{n+1}C^n}\right)
\end{align*} which is less than $C^{n+1}$ as soon as $C\geq 3$ because $R_n/M_{n+1}$ is super-exponentially small in $n$. 
The reverse inequality being similar, the result is proved. 
\end{proof}

From now on we refer to components of $X_{n,s}$ as {\em components of depth} $n$. 
Let $[\lrd_n, \urd_n]$ be the interval of variation 
 of the conformal radii of components of depth $n$.
By the previous proposition, $\lrd_n\approx \urd_n\approx \frac{R_n}{M_n}$. To get a good distinction between scales, from now on we assume that 
$(m_n)$ has super-exponential growth, so that for every $C$, if $n$ is large enough, 
$C^{n+1} \frac{R_{n+1}}{M_{n+1}} \leq C^{-n} \frac{R_n}{M_n}$, so in particular 
$\urd_{n+1}< \lrd_n$.

We will also need to take into consideration the size of {\em intermediate components 
of depth $n+1$}, that is, the components of the form $X^{\mathrm{int}}_{n+1, s, \sigma}$. By construction, the conformal radius of such a component is $\unsur{m_{n+1}}$ times the radius of the component of depth $n$ in which it sits, so that if $[\lrd^{\mathrm{int}}_{n+1}, \urd^{\mathrm{int}}_{n+1}]$ denotes the corresponding range of radii, we have that $[\lrd^{\mathrm{int}}_{n+1}, \urd^{\mathrm{int}}_{n+1}] = \unsur{m_{n+1}}[\lrd_{n}, \urd_{n}]$. Notice also that the largest component of depth $n+1$ lies in some intermediate component, so $\urd_{n+1}\leq \urd^{\mathrm{int}}_{n+1}$.
So, still assuming that $m_n$ has super-exponential growth we conclude 
that
$\urd_{n+1}< \urd^{\mathrm{int}}_{n+1}< \lrd_n$.
 
We are now in position to estimate $\mu(B(p,r))$ from above for every $p$.

\begin{prop} \label{prop:mass}
Assume that $(m_n)$ has super-exponential growth. There exists a constant  $C$ such that 
for every $p$ in the slice and $n$ large enough the following holds: 
\begin{itemize}
 \item if  $\lrd_{n+1}\leq r \leq \urd^{\mathrm{int}}_{n+1}$, then $\mu(B(p,r))\leq \frac{C^{n}}{M_{n+1}^2}$;
 \item if  $\urd^{\mathrm{int}}_{n+1}\leq r\leq \lrd_{n}$ then $\mu(B(p,r))\leq \frac{C^{n}}{R_n^2} r^2$.
\end{itemize}
\end{prop}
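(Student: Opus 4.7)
The strategy is to pin down how much mass $\mu$ puts on each combinatorial cell of the Cantor construction, and then to bound $\mu(B(p,r))$ by counting the number of such cells the ball can meet. The key initial computation will be that, for every component $\Delta$ of $X_{n,s}$ in the slice (a ``cell of depth $n$'') and every intermediate cell $\Delta^{\mathrm{int}}$ of depth $n+1$,
\[
\mu(\Delta) = \frac{2\pi}{2^n \#\mathcal{S}_n}, \qquad \mu(\Delta^{\mathrm{int}}) = \frac{2\pi}{2^n \#\mathcal{S}_n \cdot \#\Sigma_{n+1}}.
\]
To prove this I would fix $k \geq n$ and use Lemma \ref{lem:construction}(iii): each component of $X_{k,s'}$ is mapped univalently by $P_{k,s'}$ onto $D(0,\delta_k)$, so it carries mass exactly $2\pi/(2^k \#\mathcal{S}_k)$ for $\mu_k$. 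Enumerating descendants in $\mathcal{S}_k = \mathcal{S}_n \times \prod_{j>n} \Sigma_j$, there are exactly $2^{k-n}\#\mathcal{S}_k/\#\mathcal{S}_n$ such components inside a fixed $\Delta$, giving $\mu_k(\Delta) = 2\pi/(2^n \#\mathcal{S}_n)$, independently of $k$. Lemma \ref{lem:construction}(i) gives $\supp \mu_k \subset X_{n+1}$, which is disjoint from $\partial\Delta$, so $\partial\Delta$ is a continuity set of $\mu$ and the identity passes to the weak limit; the intermediate-cell statement follows identically.

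Once $\#\Sigma_k \asymp m_k^2$ (arranged by taking each $m_k$ large), one has $\#\mathcal{S}_n \approx M_n^2$, so $\mu(\Delta^{\mathrm{int}}) \leq C^n/M_{n+1}^2$. Next I would extract the metric picture from Proposition \ref{prop:distortion}: inside a depth-$n$ cell $\Delta$, the intermediate cells of depth $n+1$ are the $P_{n,s}$-preimages of a regular square grid of disks of radius $\delta_n/m_{n+1}$ and spacing $3\delta_n/m_{n+1}$. Combining bounded Koebe distortion with the derivative estimate $|P_{n,s}'| \approx \delta_n M_n/R_n$ shows that on the slice they form a distorted grid of quasi-round cells of diameter $\approx R_n/M_{n+1}$, pairwise separated by $\approx R_n/M_{n+1}$ (all $\approx$ within factors $C^n$). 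In particular every intermediate cell has area at least $C^{-n}(R_n/M_{n+1})^2$, and distinct intermediate cells are globally disjoint by Lemma \ref{lem:construction}(iii).

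A direct area count then controls the number $N(p,r)$ of intermediate cells meeting $B(p,r)$: any such cell is contained in $B(p, r + C^n R_n/M_{n+1})$, hence
\[
N(p,r) \leq C^n\bigl(1 + r^2 M_{n+1}^2/R_n^2\bigr).
\]
Since $X$ is contained in the union of intermediate cells of depth $n+1$,
\[
\mu(B(p,r)) \leq N(p,r)\cdot \frac{C^n}{M_{n+1}^2} \leq C^n \max\!\left(\frac{1}{M_{n+1}^2},\ \frac{r^2}{R_n^2}\right),
\]
and the two cases of the proposition follow since $\urd^{\mathrm{int}}_{n+1}\approx R_n/M_{n+1}$. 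I expect the main obstacle to be the mass identity in the first step: weak convergence of the $T_k$'s does not automatically control slice masses on specific sets, and the argument really relies on Lemma \ref{lem:construction}(i) to make $\partial\Delta$ a $\mu$-null set. Everything else is a Koebe plus area-count routine, and the super-exponential growth of $(m_n)$ --- already in force in the statement --- is used only to swallow the accumulated factors $C^n$ into the $m_{n+1}$ gains.
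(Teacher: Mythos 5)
Your proposal is correct and follows the paper's argument essentially step for step: establish that each intermediate component of depth $n+1$ carries $\mu$-mass $\approx M_{n+1}^{-2}$ (up to $C^n$), use the bounded distortion and derivative estimate of Proposition \ref{prop:distortion} to get diameter and area bounds for these components, and count by an area/disjointness argument how many of them a ball $B(p,r)$ can meet. The only differences are cosmetic: you justify the mass identity carefully via a weak-convergence/continuity-set argument where the paper simply says ``by construction'' (a worthwhile precision; note the global disjointness of the intermediate cells really comes from the nesting in Lemma \ref{lem:construction}(i) by induction rather than from (iii)), and you merge the two ranges of $r$ into the single bound $C^n\max\left(M_{n+1}^{-2},\ r^2/R_n^2\right)$, which the paper treats as two parallel cases.
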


\begin{proof}
Fix $A$ such that 
 $A^{-n} \frac{R_n}{M_n}\leq \lrd_n \leq  \urd_n \leq  A^{n} \frac{R_n}{M_n}$ for all  $n$.
Observe  first that by construction, the $\mu$-mass of a component of depth $n$ equals its $\mu_n$ mass. In particular the mass of a component of depth $n$ is 
$\unsur{2^n\#\mathcal{S}_n}\approx \unsur{M_n^2}$ and the mass of an intermediate component of depth $n+1$ is $\frac{2}{2^{n+1}\#\mathcal{S}_{n+1}}\approx \unsur{M_{n+1}^2}$. 

The argument for estimating the mass of balls is the same in both cases. 
Due to the  bound on distortion, if $\Delta$ is an intermediate component of depth $n+1$, then 
$\mathrm{Diameter}(\Delta)\leq K\urd_{n+1}^{\mathrm{int}}$ and $\mathrm{Area}(\Delta)\geq \unsur{K}\lrd_{n+1}^2$ for some $K$.  
If $\lrd_{n+1} \leq r \leq  \urd^{\mathrm{int}}_{n+1}$,
any intermediate component of depth $n+1$ intersecting $B(p,r)$  must be contained in 
$B(p, (K+1) \urd^{\mathrm{int}}_{n+1})$, which in turn contains 
at most $\frac{\pi (K+1)^2(\urd^{\mathrm{int}}_{n+1})^2}{(\lrd^{\mathrm{int}}_{n+1})^2/K}\leq \pi K (K+1)^2 A^{4n}$ of them, due to the area bound. Hence we conclude that 
$\mu(B(p,r))\leq \frac{C^{n}}{M_{n+1}^2}$ for some $C$.

In the other case, we argue that  an intermediate component of depth $n+1$ intersecting $B(p,r)$  is contained in 
$B(p, r+ K \urd^{\mathrm{int}}_{n+1})\subset B(p, (K+1)r)$, so the total number  of those does not exceed  $\frac{\pi (K+1)^2r^2}{(\lrd^{\mathrm{int}}_{n+1})^2/K}$, and we conclude by using the fact that 
$\lrd^{\mathrm{int}}_{n+1}\approx \frac{R_n}{M_{n+1}}$.
\end{proof}

\begin{proof}[Proof of Proposition \ref{prop:slice}]
 As before, write $h(r) = r^2 \theta(r)$, with $\lim_0\theta= +\infty$. As in Proposition \ref{prop:anyweight} 
we can always replace $\theta$ with some {\em decreasing} function of slower growth, and prove the result for the new $\theta$.
We want to choose $(m_n)$ so that $\mu(B(p,r))\leq r^2\theta(r)$ for small enough $r$.   

By Proposition \ref{prop:mass}, 
when $\lrd_{n+1}\leq r \leq \urd^{\mathrm{int}}_{n+1}$, $\mu(B(p,r)) \leq \frac{C^n}{M_{n+1}^2}$, and $h(r)\geq h(\lrd_{n+1})$, so
  a  sufficient condition for $\mu(B(p,r))\leq h(r)$ is  that 
$$\frac{C^n}{M_{n+1}^2}\leq h\left( A^{-(n+1)}\frac{R_{n+1}}{M_{n+1}}\right),$$ 
where $A$ is as in the proof of Proposition \ref{prop:mass}. This rephrases as 
\begin{equation}\label{eq:theta_1} \theta\left( A^{-(n+1)}\frac{R_{n+1}}{M_{n+1}}\right) \geq C^nA^{n+1}\unsur{R_{n+1}^2}.
\end{equation}

In the alternate case where $ \urd^{\mathrm{int}}_{n+1}\leq r\leq \lrd_n$, since $\theta$ is decreasing, 
if $\theta(\lrd_n) \geq \frac{C^n}{R_n^2} $ we infer that 
 $$\mu(B(p,r))\leq \frac{C^{n}}{R_n^2} r^2 \leq r^2 \theta(\lrd_n) \leq r^2\theta(r).$$ For this a  sufficient condition is  that 
\begin{equation}\label{eq:theta_2}
\theta\left(A^{n}\frac{R_n}{M_n}\right)\geq C^n\frac{1}{R_n^2}. 
\end{equation}

From  \eqref{eq:theta_1} and \eqref{eq:theta_2}, we conclude that to achieve the desired conclusion it is enough that for large $n$, $\theta\left(A^{n}\frac{R_n}{M_n}\right) \geq (CA)^n\unsur{R_n^2}$. Now, since $\lim_0 \theta= +\infty$, it is clear by induction on $n$
 that this condition will be satisfied if $m_n$ is chosen to be sufficiently large. 
\end{proof}

\subsection{Transfer of regularity and conclusion}\label{subs:mongeampere}
To study the regularity of $T$ throughout the bidisk, we  use some basic estimates for solutions of homogeneous 
complex Monge-Ampère equations. Let us introduce some notation from \cite{bt}. Let $u$ be a psh function in some open set $\om$, $\zeta\in \cd$ be a unitary vector, and $r >0$. If $p\in \om_r = \set{p\in \om, \ \dist(p,\fr\om) >r}$, we let 
$$\left(T_{\zeta, r}u\right)(p) = r^{-2}\left( u_{\zeta, r}(p) - u(p)\right) \text{, with } u_{\zeta, r}(p)=\unsur{2\pi}\int_0^{2\pi} u(p+ r\zeta e^{i\theta}) d\theta .$$
Recall also the classical Jensen formula for a subharmonic function in one variable 
$$\unsur{2\pi}\int_0^{2\pi} u(re^{i\theta}) d\theta - u(0) = \int_0^r \frac{n(t)}{t}dt \text{, where } n(r) =\int_{\set{\abs{z}\leq r}}\Delta u, $$ so that if now $u$ is psh in $\om$ and if we denote by $n_{\zeta, r}(p)$ the mass of $dd^cu$ along the flat disk of radius $r$ in the direction $\zeta$, centered at $p$, we infer that $\left(T_{\zeta, r}u\right)(p)  = r^{-2}\int_0^r \frac{n_{\zeta, t}(p)}{t}dt$.

\medskip

We can now finish the proof of Theorem \ref{thm:refined}. Fix a gauge $\psi$ with $\frac{\psi(r)}{r\abs{\log r} }\cv \infty$. By Proposition \ref{prop:anyweight} (and its proof) there exists a decreasing $\theta$, with $\lim_0\theta = +\infty$ and $\frac{\theta'}{\theta} = 
o\left(\frac{1}{r\abs{\log r}} \right)$, such that $h(r)= r^2 \theta(r)$ satisfies \eqref{eq:opsi}. 
Notice that since $\theta$ is decreasing, $h(Ar) = O(h(r))$ for $A\geq 1$.  
By Proposition \ref{prop:slice}, 
we can choose $(m_n)$ so that for every  slice of the form $\pi^{-1}(z_0)$, with $\pi(z,w) = z+\gamma w$, $\abs{\gamma}\leq \unsur{100}$
  and  $\frac25 <\abs{z_0}<\frac{4}{10}$, the slice measures of $T$ satisfy $\mu(B(p,r))\leq Ch(r)$ for every $p$ and $r>0$. 
Notice that the union of these slices contains the open set $\set{\frac25 <\abs{z}<\frac{4}{10}}\times \dd$.

\medskip

Using the above notation, if $\zeta=(\zeta_1, \zeta_2)$ is a unit vector  in $\cd$, with $\abs{\zeta_1}\leq\unsur{100}\abs{\zeta_2}$, 
we have that for every $p\in \set{\frac25 <\abs{z}<\frac{4}{10}}\times \dd$,  $n_{\zeta, r}(p) = O(h(r))$. 
Thus by Jensen's formula we infer that for every such $p$ and $\zeta$,
$$0\leq (T_{\zeta, r}u)(p) = r^{-2}\int_0^r \frac{n_{\zeta,t}(p) }{t}dt \leq C  r^{-2}\int_0^r t\theta(t)dt \leq  C \theta(r),$$ where the last inequality follows from an integration by parts, as in the proof  of Lemma \ref{lem:specialcase}. 
Notice that if $p$ is close to the horizontal boundary of $D\big(0, \frac{4}{10}\big)\times \dd$,  $(T_{\zeta, r}u)\equiv 0$ since $u$ is pluriharmonic there. 

Now if we let $\om =D\big(0, \frac{4}{10}\big)\times \dd$,   by \cite[Theorem 6.4]{bt}, if $r<\e$  
$$\sup\set{(T_{\zeta, r}u)(p), \ p\in \om} = \sup \set{(T_{\zeta, r}u)(p), \ p\in \om, \ \dist(p,\fr\om) <\e },$$ so we conclude that throughout the bidisk $D\big(0, \frac{4}{10}\big)\times \dd$, the estimate $(T_{\zeta, r}u)(p) \leq  C \theta(r)$ holds. 
Now we use the Jensen formula again and the reverse estimate 
$\int_0^r \frac{n(t)}{t}dt \geq \int_{r/2}^r\geq (\log 2) n(\frac{r}{2})$ and we obtain that for every  vector 
 $\zeta$ close to the vertical  as above, and every 
$p\in D\big(0, \frac{4}{10}\big)\times \dd$,  we have  $n_{\zeta, r}(p) = O(r^2\theta(2r))= O(h(r))$.

\medskip

To apply Proposition \ref{prop:modulus} and conclude that $u$ is $C^{1+\psi}$, we need to control the mass 
of small {\em balls} for $\Delta u$, or equivalently for the trace measure $\sigma_T$ of $T$. Because $T$ is a positive current, 
it is well known that 
controlling slice masses in two  directions gives a control of the trace measure. Indeed, let
$\omega_\cd = idz\wedge d\overline z + idw\wedge d\overline w$ (resp. $\omega_\cc= idz\wedge d\overline z$)  be the standard K\"ahler form of $\cd$ (resp. $\cc$). If $\pi_j:(z,w)\mapsto z+\gamma_jw$, $j=1,2$  are two distinct  projections, there exists a constant $C$ depending on the $\gamma _j$ 
such that $\omega_{\cd}\leq C(\pi_1^*\omega_\cc+ \pi_2^*\omega_\cc)$, so 
$\sigma_T = T\wedge \omega_\cd\leq C\sum_{j=1,2} T\wedge \pi_j^*\omega_\cc$. Finally, since the projection (resp. the fibers) of 
$B(p,r)$ under $\pi_j$ are contained in disks of radius $\leq Cr$, 
by the Slicing Formula we infer that   $(T\wedge \pi_j^*\omega_\cc) (B(p,r)) \leq Cr^2 h(Cr) =O(r^2h(r))$, which 
by Proposition \ref{prop:modulus} and our assumption on $\theta$, implies that $u$ is $C^{1+\psi}$ in 
$D\big(0, \frac{4}{10}\big)\times \dd$. Of course to obtain the same result in  $D\big(0, \frac{1}{2}\big)\times \dd$ it suffices to  consider projections closer to the vertical and an exhaustion argument.
\hfill $\square$

\begin{rmk}
 The arguments developed here incidentally show that if $\om$ is a bounded open set and $u\in \mathcal{C}(\om)$ is a 
solution of the homogeneous Monge-Ampère equation
 which is $C^{1,\alpha}$ near $\fr\om$ ($0<\alpha<1$), 
then it is $C^{1,\alpha}$ everywhere, a consequence of \cite{bt} which doesn't seem to be so well-known. This  uses the following 
classical converse to Proposition \ref{prop:modulus}: if a plane subharmonic function $u$ is $C^{1,\alpha}$, then the mass of a ball of radius $r$ is $O(r^{1+\alpha})$. It is also 
 possible to state a $C^{1+\psi}$ analogue of this result,  with a small loss on $\psi$ in the transfer of regularity.  
\end{rmk}

\section{Miscellaneous concluding remarks}

\subsection{Sibony's example}
We first show that in the construction of Sibony referred to in the introduction, 
the Wermer example $\widehat{X}$ has zero trace measure. This is a consequence of the following observation. 

\begin{prop}\label{prop:sibony}
Let $u$ be a nonnegative $C^{1,1}$ psh function in the unit ball of $\mathbb{C}^2$, and let $T=dd^cu$. Then $\sigma_T(\set{u=0})=0$.
\end{prop}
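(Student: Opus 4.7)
\medskip

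My plan is to exploit two features of $C^{1,1}$ regularity: first, that $\sigma_T$ is absolutely continuous with bounded density, so integration-by-parts against a Lipschitz test function is legitimate; second, that on a nonnegative $C^{1,1}$ function the gradient is controlled by the square root of the function. Concretely, I would first record the following pointwise estimate: if $u\geq 0$ is $C^{1,1}$ with $\nabla u$ having Lipschitz constant $C$, then
\begin{equation*}
\abs{\nabla u(p)}^2\leq 2C\,u(p)\qquad\text{for every }p.
\end{equation*}
The proof is a one-line Taylor argument: moving from $p$ in the direction $-\nabla u(p)/\abs{\nabla u(p)}$ for distance $t$ gives $u\leq u(p)-t\abs{\nabla u(p)}+\tfrac12 Ct^2$, and optimizing in $t$ together with nonnegativity of $u$ yields the claim. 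In particular $\nabla u$ vanishes on $E:=\set{u=0}$.

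The main step is then an integration by parts against the Lipschitz cutoff
\begin{equation*}
\phi_\e(x)=\max\!\left(0,\,1-\tfrac{u(x)}{\e}\right),
\end{equation*}
which equals $1$ on $E$, is supported in $\set{u<\e}$, and satisfies $\nabla\phi_\e=-\unsur{\e}\mathbf 1_{\set{0<u<\e}}\nabla u$. Cutting off with a smooth $\chi$ supported in a compact set $K$ of the ball, and using that $\sigma_T=c_0(\Delta u)\,dV$ with $\Delta u\in L^\infty$, I integrate by parts:
\begin{equation*}
\sigma_T(E\cap K)\leq\int\chi\phi_\e\,d\sigma_T
=-c_0\!\int\phi_\e\,\nabla\chi\cdot\nabla u\,dV
+\frac{c_0}{\e}\!\int_{\set{0<u<\e}}\!\!\chi\,\abs{\nabla u}^2\,dV.
\end{equation*}
The first integral tends to $0$ by dominated convergence, since $\phi_\e\to\mathbf 1_E$ pointwise and $\nabla u=0$ on $E$. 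For the second, the gradient estimate gives $\e^{-1}\abs{\nabla u}^2\leq 2C$ on $\set{0<u<\e}$, so the term is bounded by $2Cc_0\,\mathrm{vol}(\set{0<u<\e}\cap\supp\chi)$, which goes to $0$ as $\e\to 0$ because the sets $\set{0<u<\e}$ decrease to $\emptyset$. Taking the limit yields $\sigma_T(E\cap K)=0$, and exhausting by compacts gives $\sigma_T(E)=0$.

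I expect the only real point to check carefully is the pointwise gradient bound $\abs{\nabla u}^2\leq 2Cu$: this is where $C^{1,1}$ regularity (as opposed to merely $C^1$ or bounded Laplacian) is essential, and the whole computation degenerates without it. Everything else is a routine calculation based on the fact that $dd^cu$ has bounded coefficients and that $\nabla u$ vanishes on the minimum set, legitimizing the integration by parts and the passage to the limit.
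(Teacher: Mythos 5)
Your proof is correct, and it takes a genuinely different route from the paper. The paper first observes that $C^{1,1}$ regularity makes $\sigma_T$ absolutely continuous with respect to Lebesgue measure, and then argues pointwise: at a point of $E=\set{u=0}$ which is both a point of twice differentiability of $u$ and a Lebesgue density point of $E$, the Taylor bound $u=O(\norm{h}^2)$ combined with the density property gives $\unsur{a_4r^4}\int_{B(p,r)}u=o(r^2)$, and the mean-value (Jensen) asymptotics $\unsur{r^2}\big(\unsur{a_4r^4}\int_{B(p,r)}u-u(p)\big)\to\frac1{12}\Delta u(p)$ force $\Delta u=0$ a.e.\ on $E$, contradicting $\sigma_T(E)>0$. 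You instead prove the quantitative gradient bound $\abs{\nabla u}^2\leq 2Cu$ (valid for any nonnegative function with Lipschitz gradient) and run an integration by parts against the Lipschitz cutoff $\max(0,1-u/\e)$; this is a more ``PDE-flavoured'' argument that avoids a.e.\ second differentiability and density points altogether, and it has the mild advantage of being a direct estimate rather than a proof by contradiction, while the paper's argument isolates more explicitly the pointwise fact that $\Delta u=0$ a.e.\ on the minimum set. Two small points to tidy up: the test function $\chi$ should be taken $\equiv 1$ on $K$ and supported in a slightly larger compact subset of the ball (as written, $\chi$ supported in $K$ need not dominate $\mathbf 1_{E\cap K}$); and since $C^{1,1}$ is a local hypothesis in the ball, the bound $\abs{\nabla u}^2\leq 2Cu$ should be stated on compact subsets, with $C$ depending on the subset (if the optimal step $t=\abs{\nabla u(p)}/C$ exits the ball one uses instead $t=\dist(p,\fr\bb)$, which still yields $\abs{\nabla u}^2\leq C'u$ locally) --- this is harmless because your argument is already localized by $\chi$.
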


\begin{proof}
Let $E = \set{u=0}$ and assume that  $\sigma_T(E)>0$. Since $T$ is a current with 
$L^{\infty}_{\rm loc}$ coefficients, $\sigma_T$ (or equivalently, the Laplacian  of $u$)
is absolutely continuous with respect to the
 Lebesgue measure  hence $E$ has positive Lebesgue measure. We will prove that $\Delta u$ vanishes a.e. on $E$, thus 
contradicting the fact that $\sigma_T(E)>0$.

 It is classical that $u$ is twice differentiable  a.e. Let $p\in E$ be such a differentiability  point. 
Since $u$ has a  minimum at $p$, $du_p$ vanishes so by the Taylor formula we infer that 
$u(p+h)-u(p)= u(p+h) = O(\norm{h}^2)$ as $h\cv 0$. 

Let $a_4$ be the volume of the unit ball of $\cd$, so that $\mathrm{Leb}(B(p,r)) = a_4r^4$. 
Almost every $p\in E$ is a density point for the Lebesgue measure, 
that is, at such a $p$, $\frac{\mathrm{Leb}(E\cap B(p,r))}{a_4r^4}\cv 1$ 
when $r\cv 0$. 
If we further assume that $u$ is twice differentiable at $p$, we infer that 
\begin{equation}\label{eq:average}
 \unsur{a_4r^4}\int_{B(p,r)} u  = \unsur{a_4r^4}\int_{B(p,r)\setminus E} u = \frac{\mathrm{Leb}(B(p,r)\setminus E)}{a_4r^4} O(r^2) = o(r^2).
\end{equation}

We conclude by using the  Jensen formula, which implies that if $\Delta u\in L^1_{\rm loc}$, then 
$$\lim_{r\cv 0}\frac{1}{r^2}\left( \unsur{a_4r^4}\int_{B(p,r)} u -u(p)\right) = \frac{1}{12} \Delta u (p)$$ almost everywhere 
 (see e.g. \cite[p. 143]{klimek}), which by \eqref{eq:average} implies that $\Delta u=0$ a.e. on $E$.
\end{proof}
 
\subsection{Hausdorff dimension and Lebesgue measure}\label{subs:hausdorff}
If a positive closed current $T$ with $C^{1,\alpha}$ potential, then the mass of a ball of radius $r$ relative to 
its trace measure is $O(r^{3+\alpha})$. In particular it cannot carry any mass on a set of Hausdorff dimension $<3+\alpha$. From this remark we conclude that the support of the current of Theorem \ref{thm:wermer} has dimension 4. It is of course possible to refine this result in the spirit of Theorem \ref{thm:refined} by using appropriate gauge functions.

\medskip

On the other hand the vertical slices of $X$ near the boundary have zero Lebesgue measure, since (we freely use the results and notation
of \S \ref{subs:slice}) they can be covered by $\approx M_n^2$ boundedly distorted balls of radius $\approx \frac{R_n}{M_n}$. So near the boundary, $X$ has zero Lebesgue measure. In a similar fashion, it is clear from the proof of Proposition \ref{prop:mass} that if $h(r)$ is a function such that $\mu(B(p,r))\leq h(r)$ for all $p$ and $r$ then necessarily $\lim_0\frac{h(r)}{r^2}= \infty$.

In particular our currents are never $C^{1,1}$, at least near the boundary 
--it is very likely that the same is true everywhere in the bidisk, but we couldn't prove it. 

\subsection{Wermer examples without subdivision}\label{subs:ordinary}
Our results give some interesting insights on the geometric properties of ordinary  Wermer examples (that is, without the subdivision step, or equivalently $m_n=1$ for all $n$). 
With notation as in Section \ref{sec:wermer}, $X$ is now defined as the nested
 intersection of the sequence of sets $\set{\abs{P_n}<\delta_n}$, with $P_{n+1} = P_n^2 +\e_{n+1}A_{n+1}$, $\e_{n+1}= \delta_n^2/2$ and 
$\delta_{n+1}= \delta_n^2r_{n+1}/4$. 
By Theorem \ref{thm:wermer_continuous}, if the series $\sum_{n\geq 1} \frac{\abs{\log r_n}}{2^n}$ converges, then the associated $T$ has continuous potential, so $X$ is not pluripolar. 

Conversely, 
the logarithmic capacity of a subset of $\cc$ of the form $\set{\abs{P}\leq \delta}$, where $P$ is a monic polynomial of degree $d$,  equals $\delta^{1/d}$, 
so  the capacity of the vertical fibers  of $X$ equals $\lim \delta_n^{1/2^n}$. Using the inductive
 definition of the $\delta_n$ it is easy to see that $\unsur{2^n} \log \delta_n = \sum_{k=1}^n \frac{\abs{\log r_k}}{{2^k}} + O(1)$, so  
if the series $\sum_{n\geq 1} \frac{\abs{\log r_n}}{2^n}$ diverges, the vertical fibers of $X$ are polar. 
It follows from \cite[Theorem 4.1]{ls} that $X$ is complete pluripolar in this case.

\medskip

By the results of \S \ref{subs:slice} the vertical slices of $X$ near the boundary are covered by $2^n$ boundedly distorted balls of super-exponentially small radius $\approx R_n$. Thus, 
even when $\sum_{n\geq 1} \frac{\abs{\log r_n}}{2^n}$ converges, these slices have Hausdorff dimension 0. 
In particular
the potential of $T$ is never   H\"older continuous in this case. 
On the other hand since $R_n$  can have arbitrary slow super-exponential growth, it can be shown that essentially 
any sub-H\"older modulus of continuity can be reached. 

To obtain H\"older continuous examples (of arbitrary exponent $<1$) without subdividing, one modifies the construction by putting $P_{n+1} = P_n^{d_{n+1}} +\e_{n+1}A_{n+1}$ for a well chosen sequence $d_n\cv\infty$. 
Details will appear elsewhere.

\subsection{Some open questions} We conclude with some questions on the geometric properties of currents with $T\wedge T=0$. 
\begin{itemize}
\itm Is it possible to get  $C^{1,1}$  regularity in Theorem \ref{thm:wermer}?  
 \itm What is the best possible regularity for {\em extremal} such currents? 
In our construction extremality is lost in the subdivision process. 
\itm Does there exist a $C^{1, 1}$ psh function $u$ in  $\mathbb{B}$ with $(dd^cu)^2=0$ and  $\supp(dd^c u)=\mathbb{B}$, which is not harmonic on any  holomorphic disk? 
\itm On the other hand, does laminarity hold in the  $C^2$ case? 
\end{itemize}

\end{document}